\numberwithin{equation}{section}
\DeclareRobustCommand*{\bfseries}{%
  \not@math@alphabet\bfseries\mathbf
  \fontseries\bfdefault\selectfont
  \boldmath
}
\def\wto{\rightharpoonup}
\def\RR{\mathbb{R}}
\def\spt{\rm spt}
\def\mwto{\stackrel{\ast}{\wto}}
\def\MM{\mathbb{M}}
\def\Mm{\mathscr M}
\def\NN{\mathbb{N}}
\def\ZZ{\mathbb Z}
\def\LL{{\mathbb{L}}}
\def\H{{\mathcal H}}
\def\Q{Q}
\def\g{\widetilde{g}}
\DeclareMathOperator{\maxw}{{r}_{0}}
\DeclareMathOperator*{\limin}{\underline{\lim}}
\DeclareMathOperator*{\limsu}{\overline{\lim}}
\DeclareMathOperator{\ssetminus}{\setminus}
\def\ssup{\!>\!}
\def\sinf{\!<\!}
\def\M{\mathrm{m}}
\def\LL{\mathbb{L}}
\def\GG{\mathbb{G}}
\def\diam{{\rm diam}}
\def\dom{{\rm dom}}
\def\inte{{\rm int}}
\def\eps{\varepsilon}
\def\O{\mathcal O}
\def\G{\mathcal G}
\def\S{\mathcal S}
\def\t{{t_\ast}}
\def\W{{W}}
\def\sgn{\textrm{sgn}}
\DeclareMathAlphabet{\mathpzc}{OT1}{pzc}{m}{it}
\newtheorem{theorem}{Theorem}[section]
\newtheorem{lemma}{Lemma}[section]
\newtheorem{proposition}{Proposition}[section]
\newtheorem{corollary}{Corollary}[section]
\newtheorem{definition}{Definition}[section]
\renewcommand{\liminf}{\limin}
\renewcommand{\limsup}{\limsu}
\theoremstyle{remark}
\newtheorem{remark}{Remark}[section]
\newlist{hyp}{enumerate}{1}
\setlist[hyp,1]{label={\rm (${\rm A}_{\arabic*}$)}}
\newlist{hypetoile}{enumerate}{1}
\setlist[hypetoile,1]{$\Diamond$}
\newlist{hyp1}{enumerate}{1}
\setlist[hyp1,1]{label={\rm ({\roman*})}}
\newlist{hypg}{enumerate}{1}
\setlist[hypg,1]{label={\rm (${\rm H}_{\arabic*}$)}}
\newlist{hypgg}{enumerate}{1}
\setlist[hypgg,1]{label={\rm (${\rm H}_{\arabic*}^\prime$)}}
\newlist{hypc}{enumerate}{1}
\setlist[hypc,1]{label={\rm (${\rm C}_{\arabic*}$)}}
\newlist{hypgr}{enumerate}{1}
\setlist[hypgr,1]{label={\rm (${\rm D}_{\arabic*}$)}}
\newlist{hypno}{enumerate}{1}
\setlist[hypno,1]{label=}
\numberwithin{equation}{section}
\title[Homogenization with quasiconvex growth]{Homogenization of unbounded integrals with quasiconvex growth}
\author[Omar Anza Hafsa]{Omar Anza Hafsa}
\author[Jean-Philippe Mandallena]{Jean-Philippe Mandallena}
\address{Laboratoire LMGC, UMR-CNRS 5508, Place Eug\`ene Bataillon, 34095 Montpellier, France.}
\address{Universit\'e de N\^\i mes, Laboratoire MIPA, Site des Carmes, Place Gabriel P\'eri, 30021 N\^\i mes, France}
\email{Omar Anza Hafsa <omar.anza-hafsa@unimes.fr>}
\email{Jean-Philippe Mandallena <jean-philippe.mandallena@unimes.fr>}
\author[Hamdi Zorgati]{Hamdi Zorgati}
\address{Universit\'e de Tunis El Manar, Facult\'e des Sciences de Tunis, Laboratoire EDP, Tunisie}
\email{Hamdi Zorgati <hamdi.zorgati@fst.rnu.tn>}
\keywords{Homogenization, quasiconvex growth, unbounded integrals, ru-usc}
\begin{document}
\begin{abstract} We study homogenization by $\Gamma$-convergence of periodic nonconvex integrals when the integrand has  quasiconvex growth with convex effective domain. 
\end{abstract}
\maketitle
\section{Introduction} 

Let $m,d\ge 1$ be two integers and $p\in [1,\infty[$. Let $\Omega\subset\RR^d$ be a nonempty bounded open set with Lipschitz boundary. For each $\eps\ssup 0$, we define $I_\eps:W^{1,p}(\Omega;\RR^m)\to [0,\infty]$ by
\[
I_\eps(u):=\int_\Omega \W\left(\frac{x}{\eps},\nabla u(x)\right)dx,
\]
where the integrand $\W:\RR^d\times \MM^{m\times d}\to [0,\infty]$ is Borel measurable and $1$-periodic with respect to the first variable, i.e., for every $x\in\Omega$, $z\in \ZZ^d$ and $\xi\in\MM^{m\times d}$ we have
\begin{align*}
\W(x+z,\xi)=\W(x,\xi).
\end{align*}

Nonconvex homogenization by $\Gamma$-convergence of the family $\{I_\eps\}_{\eps>0}$ was mainly studied in the framework of $p$-polynomial growth conditions on $\W$. Unfortunately, this framework is not compatible with the two basic conditions of hyperelasticity: the non-interpenetration of the matter, i.e., $\W(x,\xi)=\infty$ if and only if $\det(I+\xi)\le 0$, and the necessity of an infinite amount of energy to compress a finite volume into zero volume, i.e., for every $x\in\RR^d$, $\W(x,\xi)\to\infty$ as $\det(I+\xi)\to 0$. 

At present, it seems difficult to take these conditions into account in homogenization problems. Generally, the attempts to go ``beyond" the $p$-polynomial growth are not easy due to the lack of available techniques. However, in the scalar case, we refer to the book \cite{carbone-dearcangelis02} where relaxation and homogenization of unbounded functionals were studied. 

In the vectorial case, partial investigations can be found in~\cite{oah-jpm11} where the homogenization of nonconvex integrals $\{I_\eps\}_{\eps>0}$ with convex growth conditions was carried out, and in~\cite{oah-jpm12-rm} where the homogenization in $W^{1,\infty}$ without growth conditions but with $\W$ having fixed bounded convex effective domain was studied.
 
In this paper we extend the homogenization result of~\cite{oah-jpm11}, when $p\ssup d$, to the case where $\W$ has {\em quasiconvex growth conditions} with {\em convex effective domain}. The main difficulty comes from the proof of the upper bound for the $\Gamma$-limit. Indeed, in the setting of convex growth conditions on $\W$ we can use mollifiers techniques to construct approximations of Sobolev functions by smooth ones. However, when we deal with quasiconvex growth, we need to develop other techniques. 
We will consider a set function which is a pointwise limit of local Dirichlet minimization problems associated to the family $\{I_\eps\}_{\eps>0}$ together with measure and localization arguments (introduced by~\cite{bouchitte-fonseca-mascarenhas98}) which reduce the proof of the upper bound to cut-off techniques, avoiding then any approximation arguments.

\subsection*{Outline of the paper} In Sect.~\ref{mr} we present the assumptions on $G$ and $\W$. Roughly, $\W$ is assumed to have $G$-growth conditions with $G$ independent of $x$ and satisfying a condition which avoid strong ``bumps". We establish the homogenization result Theorem~\ref{main result-cor} whose proof is based on two propositions. Proposition~\ref{main result} is concerned with the lower bound of the $\Gamma$-limit, and the upper bound of the $\Gamma$-limit in the restrictive case where the gradients belong to the interior of the effective domain. Next, we may need to extend the homogenized integrand to the boundary of the effective domain, this is the purpose of Proposition~\ref{lsc-of-HW}. At the end of the section we show that our result is an extension of the classical homogenization theorem with $p$-polynomial growth in the case $p\ssup d$.

In Sect.~\ref{preliminaries} we present some preliminary notions and results needed in the proofs of the main result. We first give an analogue property of convex functions for nonconvex integrands satisfying~\ref{A2} and~\ref{A3}. Then, we give the definition and some properties of radially uniformly upper semicontinuous integrands. In Subsect.~\ref{subtheorem}, we recall some basic facts about subadditive invariant set functions which allow easily to caracterize the homogenized formula. Subsect.~\ref{localdirichletproblems} is devoted to the introduction of the pointwise limit of local Dirichlet minimization problems associated to a family of variational functionals. 

In Sect.~\ref{proof-lsc-hw} we prove Proposition~\ref{lsc-of-HW}.

In Sect.~\ref{proof-mainresult1} we prove the lower bound for the $\Gamma$-limit by the method of localization and cut-off techniques. 

In Sect.~\ref{proof-mainresult2} we prove the upper bound for the $\Gamma$-limit for gradients in the interior of the effective domain in three steps. The first step consists in proving that the $\Gamma$-$\lim\!\sup$ is lower than a suitable envelope (similar to a Carath\'eodory type envelope in measure theory) of a set function given by the pointwise limit of local Dirichlet minimization problems associated to the family $\{I_\eps\}_{\eps>0}$. This envelope turns out to be a nonnegative finite Radon measure by a domination condition coming from the $G$-growth conditions. Then, the second step is devoted to prove the local equivalence of the envelope with the set function through Radon-Nikodym derivative. In the last step, we use cut-off functions techniques allowing to substitute the Sobolev functions with their affine tangents maps and we conclude by a subadditive argument which gives the homogenized formula.

In Sect.~\ref{proof-theorem} we prove Theorem~\ref{main result-cor} using Propositions~\ref{main result} and~\ref{lsc-of-HW}.

In Sect.~\ref{example} we give an example, when $d=m=2$, of $\W$ and $G$ satisfying the requirements of the homogenization result. Moreover, we show that the two basic conditions of hyperelasticity can be considered for special constraints on the deformations which make the tension/compression more important than shear deformations.

\section{Main result}\label{mr} 
Let $G:\MM^{m\times d}\to [0,\infty]$ be a Borel measurable function which will play the role of growth conditions on $\W$. We denote by $\GG$ the effective domain of $G$ and $\inte(\GG)$ the interior of $\GG$. We consider the following conditions on $G$:
\begin{hypc}

\item\label{A2} $0\in\inte(\GG)$;

\item\label{A3} there exists $C>0$ such that for every $\xi,\zeta\in\MM^{m\times d}$ and every $t\in ]0,1[$
\begin{align*}
G(t\xi+(1-t)\zeta)\le C(1+G(\xi)+G(\zeta));
\end{align*}

\item\label{A0} $G$ is $W^{1,p}$-quasiconvex, i.e., for every $\xi\in\MM^{m\times d}$
\begin{align*}
G(\xi)=\inf\left\{\int_{Y} G(\xi+\nabla \varphi(x))dx: \varphi\in  W^{1,p}_0(Y;\RR^m)\right\}
\end{align*}
where $Y=]0,1[^d$.\\
\end{hypc}
\begin{remark} \label{convexity-eff-domain-remark} Some comments on the previous assertions are in order:
\begin{enumerate}[label=(\roman*)]
\item The condition~\ref{A3} forbids the possible ``strong bumps" of $G$.
\item \label{convexity-eff-domain} Note that the effective domain $\GG$ is {\em convex} when~\ref{A3} holds. Moreover, if both~\ref{A2} and~\ref{A3} hold then we have the well-known property 
\begin{align*}
t\xi\in\inte(\GG)
\end{align*}
for all $t\in [0,1[$ and all $\xi\in\overline{\GG}$, where $\overline{\GG}$ denotes the closure of $\GG$. It is equivalent to
\begin{align*}
\inte(\GG)=\mathop{\cup}_{t\in[0,1[}t\overline{\GG}.
\end{align*}
\end{enumerate}
\end{remark}

The integrand $\W$ is supposed to verify the following growth and  coercivity conditions:
\begin{hypg}
\item\label{B1} $G$-growth conditions, i.e., there exist $\alpha,\beta>0$ such that for every $x\in\RR^d$ and every $\xi\in\MM^{m\times d}$
\begin{align*}
\alpha G(\xi)\le \W(x,\xi)\le\beta (1+G(\xi));
\end{align*}
\item\label{A1} $\W$ is $p$-coercive, i.e., there exists $c\ssup0$ such that for every $(x,\xi)\in\RR^d\times\MM^{m\times d}$  
\begin{align*}
c\vert\xi\vert^p\le \W(x,\xi).
\end{align*}
\end{hypg}

If $L:\Omega\times\MM^{m\times d}\to [0,\infty]$ is a Borel measurable integrand which is $1$-periodic with respect to the first variable then $\H L:\MM^{m\times d}\to [0,\infty]$ defined by 
\begin{align}\label{BM-formula}
\H L(\xi):=\inf_{k\in\NN^*}\inf\left\{\fint_{kY} L(x,\xi+\nabla \varphi(x))dx:\varphi\in W^{1,p}_0(kY;\RR^m)\right\}
\end{align}
is usually called the {\em Braides-M\"uller homogenization formula} (see~\cite{braides85,muller87}). When $L$ has $p$-polynomial growth, the corresponding homogenized functional by $\Gamma$-convergence with respect to the strong topology of $L^p(\Omega;\RR^m)$ is given by
\begin{align*}
\int_\Omega \H L(\nabla u(x))dx
\end{align*}
for all $u\in W^{1,p}(\Omega;\RR^m)$ (see Subsect.~\ref{pgrowth-homogenization}).
\begin{remark}\label{domhw} If~\ref{B1} and~\ref{A0} hold then $\dom \H\W=\GG$. Indeed, by a change of variables and periodicity arguments we see that
\begin{align*}
\inf_{\varphi\in W^{1,p}_0(nY;\RR^m)}\fint_{nY} G(\xi+\nabla \varphi(x))dx=G(\xi)
\end{align*}
for any $\xi\in\MM^{m\times d}$ and $n\in\NN$. So, using the $G$-growth conditions~\ref{B1} we obtain
\begin{align*}
\alpha G(\xi)\le\H\W(\xi)\le\beta(1+G(\xi))
\end{align*}
which implies $\dom\H\W=\GG$.
\end{remark}

Since the effective domain of $\W$ is not necessarily the whole space $\MM^{m\times d}$, we may need to extend the homogenized integrand to the boundary of $\GG$. For this purpose, we say that $L$ is {\em periodically radially uniformly upper semicontinuous (periodically ru-usc)} if there exists $a\in L^1_{\rm loc}(\RR^d;]0,\infty[)$ $1$-periodic such that\\ 
\begin{align*}
\limsup\limits_{t\to1^-}\Delta^a_L(t)\leq 0
\end{align*} 
with 
\begin{align*}
 \Delta_L^a(t):=\sup_{x\in U}\sup_{\xi\in \LL_x}{L(x,t\xi)-L(x,\xi)\over a(x)+L(x,\xi)}
\end{align*}
where $\LL_x$ is the effective domain of $L(x,\cdot)$ (see Subsect.~\ref{ru-usc functions}). 

\medskip

The homogenization of the family $\{I_\eps\}_{\eps>0}$ is achieved by $\Gamma$-convergence. We refer to the book of G. Dal Maso~\cite{dalmaso93} for a good introduction to the $\Gamma$-convergence theory. We give a brief description and specify some notation. We denote by $\O(\Omega)$ the set of all open subsets of $\Omega$. Define $I_-,I_+:W^{1,p}(\Omega;\RR^m)\times \O(\Omega)\to [0,\infty]$ by
\begin{align*}
I_-(u;O):=&\inf\left\{\liminf_{\eps\to 0}I_\eps(u_\eps;O): u_\eps\to u\mbox{ in }L^p(\Omega;\RR^m)\right\};\\
I_+(u;O):=&\inf\left\{\limsup_{\eps\to 0}I_\eps(u_\eps;O): u_\eps\to u\mbox{ in }L^p(\Omega;\RR^m)\right\}.
\end{align*}
For any $O\in \O(\Omega)$, the functional $I_-(\cdot;O)$ (resp. $I_+(\cdot;O)$) is called the $\Gamma$-$\lim\!\inf$ (resp. the $\Gamma$-limsup)  with respect to the strong topology of $L^p(\Omega;\RR^m)$ of the family $\{I_\eps(\cdot;O)\}_{\eps>0}$. Note that we always have $I_+(\cdot;O)\ge I_-(\cdot;O)$. When $I_+(\cdot;O)= I_-(\cdot;O)$ we say that the family $\{I_\eps(\cdot;O)\}_{\eps>0}$ $\Gamma$-converges with the $\Gamma$-limit given by the common value and we write 
\begin{align*}
I_0(\cdot;O)=\Gamma\mbox{-}\lim_{\eps\to 0}I_\eps(\cdot;O).
\end{align*}
When $O=\Omega$ we simply write
\begin{align*}
I_0(\cdot)=\Gamma\mbox{-}\lim_{\eps\to 0}I_\eps(\cdot).
\end{align*}

\bigskip

We define the {\em radial extension} of $\H\W$ by
\begin{align*}
\widehat{\H\W}(\xi):=\liminf\limits_{t\to 1^-}\H\W(t\xi)
\end{align*}
for all $\xi\in\MM^{m\times d}$.
\medskip

Here is the main result of our paper. 
\begin{theorem}\label{main result-cor} Assume that $p\ssup d$. Assume that~\ref{A2},~\ref{A3},~\ref{A0},~\ref{B1} and~\ref{A1} hold. If $\W$ is periodically ru-usc then $\{I_\eps\}_{\eps>0}$ $\Gamma$-converges with respect to the strong topology of $L^p(\Omega;\RR^m)$ to $I_{0}:W^{1,p}(\Omega;\RR^m)\to [0,\infty]$ given by 
\begin{align*}
I_{0}(u)=\int_\Omega \widehat{\H\W}(\nabla u(x))dx.
\end{align*}
\end{theorem}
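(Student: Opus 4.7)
My plan is to assemble Theorem~\ref{main result-cor} from the two ingredients announced in Section~\ref{mr}. Proposition~\ref{main result} supplies both the $\Gamma$-lim\,inf inequality for every $u\in W^{1,p}(\Omega;\RR^m)$ and the $\Gamma$-lim\,sup inequality in the restrictive regime $\nabla u(x)\in\inte(\GG)$ a.e., while Proposition~\ref{lsc-of-HW} handles the extension of $\H\W$ up to $\partial\GG$ via the radial envelope $\widehat{\H\W}$, notably establishing that $\H\W$ inherits the periodic ru-usc property of $\W$. The lower bound then transfers immediately; the substantive work is upgrading the restricted upper bound to arbitrary $u\in W^{1,p}(\Omega;\RR^m)$ by a scaling argument that exploits the ru-usc hypothesis together with the convexity of $\GG$.

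For the lower bound I combine the inequality $I_-(u;\Omega)\geq\int_\Omega\H\W(\nabla u(x))\,dx$ from Proposition~\ref{main result} with the pointwise estimate $\widehat{\H\W}\leq\H\W$, itself a direct consequence of ru-usc of $\H\W$ (Proposition~\ref{lsc-of-HW}), to obtain $I_-(u;\Omega)\geq\int_\Omega\widehat{\H\W}(\nabla u(x))\,dx$.

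For the upper bound I fix $u$ with $\int_\Omega\widehat{\H\W}(\nabla u)\,dx<\infty$, which forces $\nabla u(x)\in\overline{\GG}$ a.e., and introduce $u_t:=tu$ for $t\in[0,1)$. By Remark~\ref{convexity-eff-domain-remark} we have $t\nabla u(x)\in\inte(\GG)$ a.e., so Proposition~\ref{main result} applies to give
\begin{align*}
I_+(u_t;\Omega)\leq\int_\Omega\H\W(t\nabla u(x))\,dx.
\end{align*}
Since $u_t\to u$ strongly in $L^p$ and $I_+(\cdot;\Omega)$ is $L^p$-lower semicontinuous (being a $\Gamma$-lim\,sup), letting $t\to 1^-$ yields
\begin{align*}
I_+(u;\Omega)\leq\liminf_{t\to 1^-}\int_\Omega\H\W(t\nabla u(x))\,dx.
\end{align*}
The ru-usc property of $\H\W$ controls the integrand pointwise by $\H\W(\nabla u(x))+\Delta^a_{\H\W}(t)(a+\H\W(\nabla u(x)))$, which, thanks to~\ref{B1} and~\ref{A3}, is $L^1$-dominated uniformly in $t$ near $1$. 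Dominated convergence together with the definition of $\widehat{\H\W}$ then delivers $I_+(u;\Omega)\leq\int_\Omega\widehat{\H\W}(\nabla u(x))\,dx$.

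The main obstacle is precisely this last passage $t\to 1^-$ at the boundary of $\GG$: without a quantitative bound on the defect $\H\W(t\xi)-\H\W(\xi)$, the integrand could blow up and no obvious $L^1$-majorant would be available. The ru-usc inequality is exactly the quantitative ingredient that remedies this, and the $G$-growth condition~\ref{B1} combined with~\ref{A3} is what converts it into an integrable domination. The coercivity~\ref{A1} and the restriction $p>d$ do their work earlier, inside Proposition~\ref{main result}, where they enable the cut-off and affine tangent map constructions in a Sobolev-embedding-friendly regime.
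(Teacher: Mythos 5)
Your overall architecture (lower bound from Proposition~\ref{main result}, upper bound by scaling $u\mapsto tu$, ru-usc to pass $t\to1^-$, lower semicontinuity of $I_+$) matches the paper's, but the decisive step --- the passage $t\to 1^-$ for gradients touching $\partial\GG$ --- is not justified as written, and this is precisely the crux of the theorem. Two concrete problems. First, you invoke Proposition~\ref{main result}~\ref{mr2} for $u_t=tu$ on the sole ground that $t\nabla u(x)\in\inte(\GG)$ a.e.; but the hypotheses of~\ref{mr2} are $\G(tu;\Omega)\sinf\infty$ and $\G(\t u;\Omega)\sinf\infty$ for some $\t\in]t,1[$, and pointwise finiteness of $G(t\nabla u(x))$ does not give integrability. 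This finiteness has to be derived (the paper does it, in the interior case, from $\alpha\,\G(u)\le\widehat{\H I}(u)$ via the continuity of the $W^{1,p}$-quasiconvex $G$ on $\inte(\GG)$ --- this is where~\ref{A0} enters --- and then~\ref{A2}--\ref{A3}; alternatively one can show $G(t\xi)\le C(1+G(0))+\frac{C}{\alpha}\widehat{\H\W}(\xi)$ using~\ref{A3} with $\zeta=0$ and~\ref{B1}, but some such argument is needed). Second, and more seriously, your dominated-convergence majorant $\H\W(\nabla u(x))+\Delta^a_{\H\W}(t)\bigl(a+\H\W(\nabla u(x))\bigr)$ is not an $L^1$ majorant: the ru-usc inequality is only available for $\nabla u(x)\in\dom\H\W=\GG$ (Remark~\ref{domhw}), and on the set where $\nabla u(x)\in\overline{\GG}\ssetminus\GG$ --- which can have positive measure while $\widehat{\H I}(u)\sinf\infty$ --- the right-hand side is identically $+\infty$; even where $\nabla u(x)\in\GG$, the hypothesis only controls $\widehat{\H\W}(\nabla u)$ in $L^1$, and $\widehat{\H\W}\le\H\W$ goes the wrong way, so $\H\W(\nabla u(\cdot))$ need not be integrable. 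Hence the limit at the boundary of $\GG$ is exactly where your argument breaks down.

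The paper's proof circumvents this with a two-stage argument that you should adopt (or reproduce an equivalent estimate). Stage one: if $\nabla u\in\inte(\GG)$ a.e., continuity of $G$ on $\inte(\GG)$ gives $\alpha\,\G(u)\le\widehat{\H I}(u)\sinf\infty$, so $\G(tu)\sinf\infty$ for all $t$ by~\ref{A2}--\ref{A3}, Proposition~\ref{main result}~\ref{mr2} applies, and dominated convergence is legitimate with the integrable majorant $\beta\bigl(1+C(1+G(0)+G(\nabla u))\bigr)$, yielding $I_+(u)\le\widehat{\H I}(u)$. Stage two: for general $u$ with $\nabla u\in\overline{\GG}$ a.e., apply stage one to $tu$ (whose gradient lies in $\inte(\GG)$ by Remark~\ref{convexity-eff-domain-remark}) to get $I_+(tu)\le\widehat{\H I}(tu)$, and then use the ru-usc property of $\widehat{\H\W}$ (Proposition~\ref{lsc-of-HW}), not of $\H\W$, to estimate $\widehat{\H I}(tu)\le\Delta^{\langle a\rangle}_{\widehat{\H\W}}(t)\bigl(\langle a\rangle|\Omega|+\widehat{\H I}(u)\bigr)+\widehat{\H I}(u)$, concluding with the lower semicontinuity of $I_+$ as $t\to1^-$. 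A final minor point: in the lower bound you quote Proposition~\ref{main result}~\ref{mr1} as $I_-\ge\H I$; the proposition actually states $I_-\ge\widehat{\H I}$ directly (and your version is false when $\nabla u$ takes values in $\overline{\GG}\ssetminus\GG$, where $\H\W=\infty$), though this does not affect the conclusion since the correct statement is what you need.
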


Define $\G, \H I, \widehat{\H I}:W^{1,p}(\Omega;\RR^m)\times\O(\Omega)\to [0,\infty]$ by
\begin{enumerate}
\item[$\blacklozenge$] $\displaystyle\G(u;O):=\int_O G(\nabla u(x))dx;$\\

\item[$\blacklozenge$] $\displaystyle\H I(u;O):=\int_O\H\W(\nabla u(x))dx;$\\

\item[$\blacklozenge$] $\displaystyle\widehat{\H I}(u;O):=\int_O\widehat{\H\W}(\nabla u(x))dx$.
\end{enumerate}

The proof of Theorem~\ref{main result-cor} is based on the following result.

\begin{proposition}\label{main result} Assume that $p\ssup d$. Assume that~\ref{A2},~\ref{A3},~\ref{B1} and~\ref{A1} hold. Let $u\in W^{1,p}(\Omega;\RR^m)$ and $O\in\O(\Omega)$. 
\begin{enumerate}[label=(\roman*)]
\item\label{mr1} If $\W$ is periodically ru-usc then 
\begin{align*}
I_-(u;O)\ge \widehat{\H I}(u;O).
\end{align*}
\item\label{mr2} Let $t\in ]0,1[$. If $\G(tu;O)\sinf\infty$ and if there exists $\t\in ]t,1[$ such that $\G(\t u;O)\sinf\infty$ then
\begin{align*}
I_+(tu;O)\le \H I(tu;O).
\end{align*}
\end{enumerate}
\end{proposition}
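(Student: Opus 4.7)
The plan is to treat (i) and (ii) separately, both by localization and blow-up arguments that identify a density against the Braides-M\"uller formula~(\ref{BM-formula}). For~(i), fix any $u_\eps \to u$ in $L^p(\Omega;\RR^m)$ with $\liminf_{\eps\to 0} I_\eps(u_\eps;O)<\infty$ (else the bound is trivial). The $p$-coercivity~\ref{A1} bounds $\{u_\eps\}$ in $W^{1,p}$. I associate to each $\eps$ the finite nonnegative Radon measure $\mu_\eps := \W(\cdot/\eps,\nabla u_\eps)\L^d\lfloor O$; a subsequence converges weakly-$*$ to some finite Radon measure $\mu$ on $O$. Writing $\mu = f\L^d + \mu_s$ with $\mu_s\ge 0$ singular, it suffices to prove $f(x_0)\ge \widehat{\H\W}(\nabla u(x_0))$ at $\L^d$-a.e.\ $x_0\in O$. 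At a Lebesgue point of $\nabla u$ which is also a differentiation point of $\mu$, I would blow up on cubes $Q_\rho(x_0)$ with $\rho/\eps\in\NN^*$, exploit periodicity to rescale, and insert a cut-off in a thin boundary annulus to land inside $W^{1,p}_0(Q_\rho(x_0);\RR^m)$. The resulting inequality is exactly the Braides-M\"uller infimum at $\nabla u(x_0)$, giving $f(x_0)\ge \H\W(\nabla u(x_0))$. To reach $\widehat{\H\W}$ I invoke the periodic ru-usc hypothesis on $\W$: substituting $t\nabla u(x_0)$ and absorbing the error via $\Delta_\W^a(t)$, then taking $\liminf_{t\to 1^-}$, upgrades the density bound to the radial extension.

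For~(ii), I would carry out the three steps sketched in the introduction. First, define $m_\eps(v;A) := \inf\{I_\eps(v+\varphi;A): \varphi\in W^{1,p}_0(A;\RR^m)\}$ on open $A\subset O$, set $m(v;A) := \limsup_{\eps\to 0} m_\eps(v;A)$, and let $m^*(v;\cdot)$ be its outer-regular envelope obtained from countable open coverings. A De~Giorgi-style slicing and partition-of-unity argument yields $I_+(tu;O)\le m^*(tu;O)$; the hypothesis $\G(tu;O)<\infty$ together with the upper growth in~\ref{B1} dominates $m^*(tu;\cdot)$ by the finite Radon measure $\beta(1+G(t\nabla u))\L^d\lfloor O$, so $m^*(tu;\cdot)$ is itself a Radon measure. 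Second, differentiate with respect to $\L^d$ and identify its Radon-Nikodym density at $\L^d$-a.e.\ Lebesgue point $x_0$ of $\nabla u$ with $\H\W(t\nabla u(x_0))$, once again by blow-up and~(\ref{BM-formula}); the condition $p>d$ enters through Morrey's embedding, giving continuous representatives and accurate affine approximation of $u$ on small cubes. Third, unwind the envelope into a genuine recovery sequence: on a fine cubic subdivision of $O$, pick near-optimizers of~(\ref{BM-formula}) at $t\xi_k$ for each cell's affine gradient $\xi_k$, rescale at size $\eps$, and patch onto $tu$ via a cut-off on an annular transition of width $\eta\gg\eps$. The safety gap $t<t_*<1$ combined with~\ref{A3} lets me bound the $G$-energy on the transition layer by a multiple of $1+G(0)+G(t_*\nabla u)$, hence integrably; sending $\eps\to 0$, then $\eta\to 0$, then refining the subdivision and using a subadditive argument in the cell size, delivers $\H I(tu;O)$.

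The main obstacle is Step~3 of~(ii): without polynomial control there is no free way to glue across a cut-off, and the whole role of the safety gap $t<t_*$ and of~\ref{A3} is to furnish the slack that keeps the blended gradient inside $\dom G$ with a uniformly integrable $G$-bound on the transition annulus. Balancing the annular width $\eta$ against the Braides-M\"uller approximation rate in $\eps$, so that the overlap contribution vanishes after the iterated limits $\eps\to 0$ and $\eta\to 0$, is the most delicate estimate.
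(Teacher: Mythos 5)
Your proposal for part~\ref{mr1} inverts the order of the two scaling operations, and this inversion is fatal, not cosmetic. You propose to insert a cut-off at the original gradient scale, obtain $f(x_0)\ge\H\W(\nabla u(x_0))$, and only then invoke the periodic ru-usc hypothesis to ``upgrade'' to $\widehat{\H\W}$. Two things go wrong. First, the cut-off step is not admissible without first shrinking: the blended gradient on the annulus is (up to convex weights) $\phi\nabla u_\eps+(1-\phi)\nabla u(x_0)$ plus the tensor error $\nabla\phi\otimes(u_\eps-u_{x_0})$, and although the first two matrices lie in the convex set $\GG$, the additive error has no reason to keep the sum inside $\dom G$. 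Here \ref{A3} cannot rescue you, because it controls $G$ of a \emph{convex combination}, not of an additive perturbation, and without a slack factor there is nothing to weight the error term by. The paper creates that slack by fixing $\t\in]t,1[$ with $\Delta_\W^a(\t)<\infty$, writing $t\nabla v_\eps=\tau\bigl(\phi\,\t\nabla u_\eps+(1-\phi)\t\nabla u(x_0)\bigr)+(1-\tau)\Phi_{\eps,\rho}$ with $\tau:=t/\t<1$, bounding $G(\Phi_{\eps,\rho})$ by $r_0$ via Lemma~\ref{lemma finite integrand} once the radius is small (here $p>d$ gives the $L^\infty$ smallness), and then \ref{A3} applies to the genuine convex combination. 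Second, the intermediate claim $f(x_0)\ge\H\W(\nabla u(x_0))$ is too strong to hold a.e.: one only knows $\nabla u(x_0)\in\overline{\GG}$, and at boundary points $\H\W(\nabla u(x_0))$ may be $+\infty$ while $f\in L^1$ is finite a.e. Moreover, since $\widehat{\H\W}\le\H\W$ on $\dom\H\W$ (Remark~\ref{ReMaRk-Delta-ru-usc-remARK1}), passing from $\H\W$ to $\widehat{\H\W}$ is a weakening, not an upgrade; the real content is to prove $f(x_0)\ge\H\W(t\nabla u(x_0))$ for all $t<1$ and then let $t\to1^-$, which requires the shrinking to precede the cut-off, not follow it.

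For part~\ref{mr2}, the broad measure-theoretic frame you sketch (envelope $\Mm^\ast$, domination by $\beta(1+G(t\nabla u))\L^d$, Radon--Nikodym) agrees with the paper, but your Step~1 and Step~3 diverge from what is actually done and your Step~3 reintroduces exactly the difficulty you flag at the end. The paper's Step~1 covers $O$ by \emph{pairwise disjoint} closed balls (a Vitali family in $\mathcal{V}_\eps(O)$) and pastes local minimizers on each ball directly, so no transition annulus, partition of unity, or De~Giorgi slicing is needed at all; the countable subadditivity of $I_\delta(\cdot;\cdot)$ over disjoint sets does all the work, and the dominated convergence hypothesis $\G(u;O)<\infty$ lets one interchange $\limsup_\delta$ with the sum. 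Step~3 of the paper is then purely \emph{local}: for a.e.\ $x_0$ one cuts off on $Q_\rho(x_0)\ssetminus\overline{Q_{s\rho}(x_0)}$ to replace $tu$ by the affine $tu_{x_0}$, the annulus contribution vanishes as $s\to1$ thanks to the $(1-s^d)$ prefactor, and Corollary~\ref{SubadditiveTheorem} identifies the resulting density with $\H\W(t\nabla u(x_0))$; one then integrates via the Radon--Nikodym formula. Your proposed Step~3 — a global recovery sequence patched on a fine cubic subdivision with transition layers of width $\eta\gg\eps$ — would require the balancing estimate you yourself call ``the most delicate'', but the paper's architecture never asks for it: the cut-off estimate is only needed on one blow-up cube at a time, where the safety gap $t<\t$, \ref{A3}, and Lemma~\ref{lemma finite integrand} suffice exactly as in part~\ref{mr1}. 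In short, the measure-theoretic detour is precisely what makes the delicate balancing avoidable, and the proposal as written still faces it.
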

The following proposition gives some properties of $\H\W$ and $\widehat{\H\W}$ when $\W$ is ru-usc. 
\begin{proposition}\label{lsc-of-HW} Assume that~\ref{A2},~\ref{A3},~\ref{A0} and~\ref{B1} hold. If $\W$ is periodically ru-usc then $\H\W$ and $\widehat{\H\W}$ are ru-usc. Moreover, we have 
\[
\widehat{\H\W}(\xi)=\lim_{t\to 1^-}\H\W(t\xi)
\]
for all $\xi\in\MM^{m\times d}$.
\end{proposition}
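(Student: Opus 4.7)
The plan is to establish three assertions in sequence: (i) $\H\W$ inherits the ru-usc property from $\W$, (ii) the liminf defining $\widehat{\H\W}$ is in fact a limit, and (iii) $\widehat{\H\W}$ is ru-usc, obtained by passing to the limit as $t\to 1^-$ in the ru-usc inequality of step (i). Throughout, let $a\in L^1_{\rm loc}(\RR^d;]0,\infty[)$ be the $1$-periodic weight witnessing ru-usc of $\W$ and set $\bar a := \int_Y a(x)\,dx$; this $\bar a$ will play the role of the (constant) weight for both $\H\W$ and $\widehat{\H\W}$.

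\emph{Step (i): $\H\W$ is ru-usc.} Fix $\xi\in\GG$, $t\in]0,1[$, and $\delta>0$. By the Braides--M\"uller formula \eqref{BM-formula} pick $k\in\NN^*$ and $\varphi\in W^{1,p}_0(kY;\RR^m)$ with $\fint_{kY}\W(x,\xi+\nabla\varphi(x))\,dx\le \H\W(\xi)+\delta$. The key move is to use $t\varphi$ (not $\varphi$) as a test function for $\H\W(t\xi)$, which yields
\[
\H\W(t\xi)\le \fint_{kY}\W\bigl(x,t(\xi+\nabla\varphi(x))\bigr)\,dx.
\]
Applying the pointwise ru-usc inequality for $\W$ at the a.e.-admissible gradient $\xi+\nabla\varphi(x)$ and using $kY$-periodicity to rewrite $\fint_{kY}a=\bar a$, then letting $\delta\to 0$, one obtains
\[
\H\W(t\xi)-\H\W(\xi)\le \Delta^a_\W(t)\bigl(\bar a+\H\W(\xi)\bigr)\qquad\forall\,\xi\in\GG,
\]
hence $\Delta^{\bar a}_{\H\W}(t)\le \Delta^a_\W(t)$ and the ru-usc condition for $\H\W$ follows.

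\emph{Step (ii): the radial limit exists.} For $\xi\notin\overline{\GG}$, closedness of $\overline{\GG}$ gives $t\xi\notin\overline{\GG}$ for $t$ near $1$, so both sides are $\infty$. For $\xi\in\overline{\GG}$, Remark~\ref{convexity-eff-domain-remark}\ref{convexity-eff-domain} gives $t\xi\in\inte(\GG)$ and thus $\H\W(t\xi)<\infty$; it remains to show $\limsup_{t\to 1^-}\H\W(t\xi)\le \widehat{\H\W}(\xi)$. Pick $t_n\to 1^-$ with $\H\W(t_n\xi)\to\widehat{\H\W}(\xi)$, and given $\eps>0$ use step (i) to produce $\delta_0>0$ such that $\Delta^{\bar a}_{\H\W}(r)\le\eps$ for $r\in]1-\delta_0,1[$. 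For $t\in]1-\delta_0/2,1[$ and $n$ large enough that $t_n>t$, the ratio $r_n:=t/t_n$ lies in $]1-\delta_0,1[$, and the ru-usc inequality of step (i) applied to $t_n\xi$ with scaling $r_n$ gives $\H\W(t\xi)\le \H\W(t_n\xi)+\eps(\bar a+\H\W(t_n\xi))$. Sending $n\to\infty$ and then $\eps\to 0$ produces the desired upper bound. This decoupling of the two scale parameters $t$ and $t_n$, so that ru-usc can be applied without spoiling the passage to the liminf, is the main technical obstacle.

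\emph{Step (iii): $\widehat{\H\W}$ is ru-usc.} For $r\in]0,1[$ and $\xi\in\overline{\GG}$, apply the ru-usc inequality of step (i) to the point $t\xi\in\inte(\GG)$ with scaling $r$, which reads $\H\W(rt\xi)\le \H\W(t\xi)+\Delta^{\bar a}_{\H\W}(r)(\bar a+\H\W(t\xi))$. Letting $t\to 1^-$ and invoking step (ii) both at $\xi$ and at $r\xi$ (using the substitution $s=rt\to r^-$ to identify the LHS limit with $\widehat{\H\W}(r\xi)$) yields
\[
\widehat{\H\W}(r\xi)\le \widehat{\H\W}(\xi)+\Delta^{\bar a}_{\H\W}(r)\bigl(\bar a+\widehat{\H\W}(\xi)\bigr),
\]
so $\Delta^{\bar a}_{\widehat{\H\W}}(r)\le \Delta^{\bar a}_{\H\W}(r)$ and $\widehat{\H\W}$ inherits the ru-usc property.
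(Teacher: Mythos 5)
Your proof is correct and follows essentially the same route as the paper: step~(i) is exactly the content of Proposition~\ref{Coro-Prop-ru-usc2} (ru-usc is stable under the Braides--M\"uller homogenization operator), and steps~(ii)--(iii) re-derive, specialized to $\H\W$ and using Remark~\ref{convexity-eff-domain-remark}\ref{convexity-eff-domain} to verify the homothety hypothesis \eqref{Homothecie-Assumption-Bis}, precisely the two parts of Lemma~\ref{Extension-Result-for-ru-usc-Functions}. The paper simply cites these two auxiliary results (together with Remark~\ref{domhw} identifying $\dom\H\W=\GG$), whereas you unfold their proofs inline; the underlying mechanism --- decoupling the two radial scales so that the ru-usc modulus $\Delta^{\bar a}_{\H\W}$ controls the gap between limsup and liminf --- is identical.
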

\begin{remark}\label{lsc=widehat} Under the assumptions of Theorem~\ref{main result-cor} it holds that
\begin{align*}
\widehat{\H\W}= \overline{\H\W}
\end{align*}
where the bar denotes the lower semicontinuous envelope of $\H\W$. Indeed, first it is easy to see that $\widehat{\H\W}\ge \overline{\H\W}$. On the other hand, $\widehat{\H\W}$ is ru-usc by Proposition~\ref{lsc-of-HW}. By Remark~\ref{ReMaRk-Delta-ru-usc-remARK1} we have $\widehat{\H\W}\leq {\H\W}$ on $\dom\H\W$ and $\dom\H\W=\GG$ by Remark~\ref{domhw}, so it follows that $\overline{\widehat{\H\W}}\le \overline{\H\W}$. But Theorem~\ref{main result-cor} implies that ${\widehat{\H\W}}$ is lower semicontinuous, i.e., $\widehat{\H\W}=\overline{\widehat{\H\W}}$, and then $\widehat{\H\W}\le \overline{\H\W}$.
\end{remark}

\subsection{Application to homogenization with $p$-polynomial growth}\label{pgrowth-homogenization} We want to show how to recover, from Theorem~\ref{main result-cor}, the classical homogenization result with $p$-polynomial growth on the integrand for $p\ssup d$ (see~\cite{braides85,muller87}).

Let $L:\RR^d\times\MM^{m\times d}\to [0,\infty]$ be a Borel measurable, $1$-periodic with respect the first variable, and satisfying $p$-polynomial growth: there exist $\alpha,\beta\ssup 0$ such that for every $\xi\in\MM^{m\times d}$
\begin{align}\label{pgrowthf}
\alpha \lvert\xi\rvert^p\le L(x,\xi)\le\beta (1+\lvert\xi\rvert^p).
\end{align}
For each $\eps\ssup0$ we define $J_\eps:W^{1,p}(\Omega;\RR^m)\to [0,\infty]$ by 
\begin{align*}
J_\eps(u):=\int_\Omega L\left(\frac{x}{\eps},\nabla u(x)\right)dx.
\end{align*}
\begin{theorem}\label{classic-homogenization} Let $p\ssup d$. The family $\{J_\eps\}_{\eps>0}$ $\Gamma$-converges with respect to the strong topology of $L^{p}(\Omega;\RR^m)$ to $J_0:W^{1,p}(\Omega;\RR^m)\to [0,\infty]$ given by
\begin{align}\label{J-gammalimit}
J_0(u):=\int_\Omega \H L(\nabla u(x))dx.
\end{align}
\end{theorem}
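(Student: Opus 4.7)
The strategy is to apply Theorem~\ref{main result-cor} with $\W := L$ and $G(\xi) := |\xi|^p$. First I would check the assumptions on $G$: \ref{A2} holds trivially since $\GG = \MM^{m\times d}$; \ref{A3} follows from convexity of $|\cdot|^p$ with constant $C = 1$; and \ref{A0} is the standard $W^{1,p}$-quasiconvexity of $|\cdot|^p$ via Jensen's inequality. Conditions~\ref{B1} and~\ref{A1} on $L$ are precisely (\ref{pgrowthf}). By Remark~\ref{domhw}, $\dom \H L = \MM^{m\times d}$; since $\H L$ is $W^{1,p}$-quasiconvex with $p$-polynomial growth it is continuous, so $\widehat{\H L} = \H L$.

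Writing $J_-, J_+$ for the $\Gamma$-liminf and $\Gamma$-limsup of $\{J_\eps\}$, the upper bound $J_+(u) \le \int_\Omega \H L(\nabla u)\,dx$ follows from Proposition~\ref{main result}\ref{mr2} (which crucially does \emph{not} require ru-usc): for each $t \in (0,1)$, $\G(su;\Omega) = s^p \|\nabla u\|_{L^p}^p$ is finite for every $s \ge 0$, hence Proposition~\ref{main result}\ref{mr2} gives $J_+(tu;\Omega) \le \int_\Omega \H L(t\nabla u)\,dx$. Sending $t \to 1^-$ and using sequential $L^p$-lower semicontinuity of $J_+(\cdot;\Omega)$, the strong convergence $tu \to u$ in $L^p$, continuity of $\H L$, and dominated convergence with the integrable envelope $\beta(1+|\nabla u|^p)$, I conclude the desired upper estimate.

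The main obstacle is the lower bound, because a generic $L$ with $p$-polynomial growth need not be periodically ru-usc: with $a$ constant, the ratio $(L(x,t\xi) - L(x,\xi))/(a+L(x,\xi))$ can stay close to $(\beta - \alpha)/\alpha > 0$ for $t$ close to $1$ along sequences $|\xi_n| \to \infty$. I would bypass this by passing to the $x$-slicewise $W^{1,p}$-quasiconvex envelope $\QQ L(x,\cdot)$: by the Acerbi--Fusco relaxation theorem $\QQ L$ retains $p$-polynomial growth and is locally Lipschitz in the gradient variable, giving the estimate $|\QQ L(x,t\xi) - \QQ L(x,\xi)| \le C(1-t)(|\xi| + |\xi|^p)$, hence $\Delta_{\QQ L}^{1}(t) \le C'(1-t) \to 0$ as $t\to 1^-$, so $\QQ L$ is periodically ru-usc. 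The pointwise inequality $L \ge \QQ L$ implies that the $\Gamma$-liminf for $\{J_\eps\}$ dominates the $\Gamma$-liminf of the corresponding family built from $\QQ L$; applying Proposition~\ref{main result}\ref{mr1} to $\QQ L$, together with the standard identity $\H \QQ L = \H L$ (commutation of quasiconvexification and the Braides--M\"uller homogenization formula), then yields $J_-(u;\Omega) \ge \int_\Omega \H L(\nabla u)\,dx$, which combined with the upper bound completes the proof of (\ref{J-gammalimit}).
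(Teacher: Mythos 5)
Your proof is correct, but it organizes the argument differently from the paper. The paper applies Theorem~\ref{main result-cor} in one shot to $\W:=\Q L$, then invokes the Dacorogna--Acerbi--Fusco relaxation result twice: once to identify $\overline{J_\eps}=I_\eps$ and the standard $\Gamma$-stability $\Gamma\hbox{-}\lim J_\eps=\Gamma\hbox{-}\lim\overline{J_\eps}$, and once to get $\H(\Q L)=\H L$; it then establishes $\widehat{\H(\Q L)}=\H(\Q L)$ by proving lower semicontinuity of $\H(\Q L)$ directly via the Lipschitz estimate \eqref{Intro-Bis-Lipschitz}, together with Remark~\ref{lsc=widehat}. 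You instead split the two halves of Proposition~\ref{main result} and apply them to \emph{different} integrands: part~\ref{mr2} directly to $\W=L$ (which, as you observe, does not require the ru-usc hypothesis and so sidesteps the obstruction that a generic $L$ with $p$-growth need not be periodically ru-usc), and part~\ref{mr1} to $\W=\Q L$ (which is ru-usc by the same Lipschitz estimate), linking the two via the pointwise inequality $L\ge\Q L$ rather than via the $\Gamma$-stability-under-relaxation lemma. You also replace the paper's explicit lower-semicontinuity computation for $\H(\Q L)$ by appealing to the standard fact that $\H L$ is $W^{1,p}$-quasiconvex with $p$-polynomial growth and hence locally Lipschitz; this is true (Braides, M\"uller), but it is an external fact the paper deliberately avoids relying on, so if you wanted a fully self-contained argument you would need to either reproduce that quasiconvexity proof or supply the paper's direct lsc estimate. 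Net effect: your route is somewhat leaner (no relaxation-of-$\Gamma$-limits step, a cleaner upper bound), at the cost of one additional cited result and a slightly heavier bookkeeping in the lower bound where two families ($J_\eps$ and the $\Q L$-family) coexist.
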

\begin{proof}First, we apply our result to the family $\{I_\eps\}_{\eps>0}$ with the growth given by $G(\cdot):=\lvert\cdot\rvert^p$ and the corresponding integrand given by $\W$ the {\em quasiconvexification} of $L$, i.e., 
\begin{align*}
\W(x,\xi)=\Q L(x,\xi):=\inf\left\{\int_YL(x,\xi+\nabla \varphi(y))dy:\varphi\in W^{1,\infty}_0(Y;\RR^m)\right\}
\end{align*}
for all $(x,\xi)\in \RR^d\times \MM^{m\times d}$. 

\medskip

It is easy to see that~\ref{A2},~\ref{A3},~\ref{A0},~\ref{B1} and~\ref{A1} are satisfied since \eqref{pgrowthf}. It remains to verify that $\W(=\Q L)$ is periodically ru-usc. Fix any $t\in[0,1]$, any $x\in\RR^d$ and any $\xi\in\GG$. As $\W$ is quasiconvex and satisfies \eqref{pgrowthf}, there exists $K>0$ such that
\begin{equation}\label{Intro-Bis-Lipschitz}
|\W(x,\zeta)-\W(x,\zeta^\prime)|\leq K|\zeta-\zeta^\prime|(1+|\zeta|^{p-1}+|\zeta^\prime|^{p-1})
\end{equation}
for all $x\in\RR^d$ and all $\zeta,\zeta^\prime\in\MM^{d\times d}$. Using \eqref{Intro-Bis-Lipschitz} with $\zeta=t\xi$ and $\zeta^\prime=\xi$ and taking the left inequality in \eqref{pgrowthf} into account, we obtain
\begin{equation}\label{Intro-Eq-Prop-Eq1}
\W(x,t\xi)-\W(x,\xi)\leq K^\prime(1-t)(1+\W(x,\xi))
\end{equation}
with $K^\prime:=3K\max\{1,{1\over \alpha}\}$. Dividing by $1+\W(x,\xi)$ and passing to the supremum in $x\in\RR^d$ and $\xi\in\MM^{m\times d}$, we have
\begin{align*}
\Delta_\W^1(t)\le K^\prime(1-t).
\end{align*}
Passing to the limit $t\to 1$, we have $\W$ is periodically ru-usc. 

Applying Theorem~\ref{main result-cor} we have for every $u\in W^{1,p}(\Omega;\RR^m)$
\begin{align}\label{gammahqf}
I_0(u):=\int_\Omega \widehat{\H(\Q L)}(\nabla u(x))dx.
\end{align}
By Remark~\ref{lsc=widehat}, we have $\widehat{\H(\Q L)}=\overline{\H(\Q L)}$. Using Dacorogna-Acerbi-Fusco relaxation result~\cite{acerbi-fusco84, dacorogna82} (see~\eqref{DAF}) we have $\H(\Q L)=\H L$ since \eqref{pgrowthf}. Assume for the moment that $\H(\Q L)$ is lower semicontinuous, i.e.,
\begin{align}\label{hqf is lsc}
\H(\Q L)=\overline{\H(\Q L)}
\end{align}
then we have
\begin{align}\label{hf=hathqf}
\H L=\widehat{\H(\Q L)}.
\end{align}
For each $\eps\ssup0$ we apply the Dacorogna-Acerbi-Fusco relaxation theorem to have 
\[
\overline{J_\eps}(u)=\int_\Omega \Q L\left(\frac{x}{\eps},\nabla u(x)\right)dx=I_\eps(u)
\]
for all $u\in W^{1,p}(\Omega;\RR^m)$, where the bar over $J_\eps$ denotes the lower semicontinuous envelope with respect to the strong topology of $L^p(\Omega;\RR^m)$. On the other hand, it is well-known that 
\begin{align}\label{gamma-relax}
(J_0(u)=)\Gamma\hbox{\rm -}\lim_{\eps\to 0}J_\eps(u)=\Gamma\hbox{\rm -}\lim_{\eps\to 0}\overline{J_\eps}(u)\left(=I_0(u)\right)
\end{align}
for all $u\in W^{1,p}(\Omega;\RR^m)$. Collecting \eqref{gammahqf}, \eqref{hf=hathqf} and \eqref{gamma-relax}, we finally obtain \eqref{J-gammalimit}.

\medskip

The only thing remains to prove is \eqref{hqf is lsc}. Let $\{\xi_\eps\}_{\eps>0},\xi_0\in \MM^{m\times d}$ be such that $\xi_\eps\to\xi_0$ as $\eps\to 0$. Without loss of generality, we can assume that 
\begin{align*}
\liminf_{\eps\to 0}\H(\Q L)(\xi_\eps)=\lim_{\eps\to 0}\H(\Q L)(\xi_\eps)\sinf\infty,\;\mbox{ and }M:=\sup_{\eps>0}\H(\Q L)(\xi_\eps)\sinf\infty.
\end{align*}
Fix $\eps\in ]0,1[$. We choose $k_\eps\in\NN$ and $\varphi_\eps\in W^{1,p}(k_\eps Y;\RR^m)$ such that
\begin{align*}
M+1\ge \eps+\H(\Q L)(\xi_\eps)&\ge \fint_{k_\eps Y} \Q L(x,\xi_\eps+\nabla \varphi_\eps(x))dx\\&=\int_{Y} \Q L(k_\eps y,\xi_\eps+\nabla \varphi_\eps(k_\eps y))dy,
\end{align*}
where a change of variable is used. Set $\phi_\eps(\cdot):=\frac{1}{k_\eps}\varphi(k_\eps\cdot)$. It is easy to see that $\Q L$ satisfies~\eqref{pgrowthf} with the same constants. So, it follows that
\begin{align}\label{bound-coercivity}
\sup_{\eps>0} \int_Y \lvert \xi_\eps+\nabla\phi_\eps(x)\rvert^pdx\le \frac{M+1}{c}.
\end{align}
By H\"older inequality and \eqref{bound-coercivity} we have for every $\eps\ssup0$
\begin{align}\label{holderqc}
\left\Vert \xi_\eps+\nabla\phi_\eps\right\Vert_{L^{p-1}(Y;\RR^m)}\le\left\Vert \xi_\eps+\nabla\phi_\eps\right\Vert_{L^{p}(Y;\RR^m)} \le\left(\frac{M+1}{c}\right)^{\frac1p},
\end{align}
and also by norm inequality 
\begin{align}\label{normqc}
\left\Vert \xi_0+\nabla\phi_\eps\right\Vert_{L^{p-1}(Y;\RR^m)}\le& \left\vert\xi_0-\xi_\eps\right\vert+\left\Vert \xi_\eps+\nabla\phi_\eps\right\Vert_{L^{p-1}(Y;\RR^m)}\\
\le&\left\vert\xi_0-\xi_\eps\right\vert+\left(\frac{M+1}{c}\right)^{\frac1p}.\notag
\end{align}
By the definition of $\H(\Q L)(\xi_0)$,~\eqref{Intro-Bis-Lipschitz}, \eqref{holderqc} and \eqref{normqc}, we have for every $\eps\ssup 0$
\begin{align*}
&\eps+\H(\Q L)(\xi_\eps)\\
\ge&\int_{Y} \Q L(k_\eps y,\xi_\eps+\nabla \phi_\eps(y))-\Q L(k_\eps y,\xi_0+\nabla \phi_\eps(y))dy+\H(\Q L)(\xi_0)\\
\ge&-K\lvert\xi_\eps-\xi_0\rvert\left(1+\int_Y \lvert \xi_\eps+\nabla\phi_\eps(x)\rvert^{p-1}+\lvert \xi_0+\nabla\phi_\eps(x)\rvert^{p-1}dx\right)+\H(\Q L)(\xi_0)\\
\ge&-K\lvert\xi_\eps-\xi_0\rvert\left(1+\left(\frac{M+1}{c}\right)^{\frac{p-1}{p}}\!\!+\!\!\left(\lvert\xi_0-\xi_\eps\rvert+\left(\frac{M+1}{c}\right)^{\frac{1}{p}}\right)^{{p-1}}\right)+\H(\Q L)(\xi_0),
\end{align*}
letting $\eps\to 0$ we obtain the lower semicontinuity of $\H(\Q L)$.
\end{proof}
\section{Preliminaries}\label{preliminaries}
\subsection{Consequence of assumptions~\ref{A2} and~\ref{A3}}The following lemma is an extension, for nonconvex functions satisfying~\ref{A2} and~\ref{A3}, of the classical local upper bound property for convex functions. 
\begin{lemma}\label{lemma finite integrand} Let $L:\MM^{m\times d}\to [0,\infty]$ be a Borel measurable integrand. If $L$ satisfies \ref{A2} and \ref{A3} then there exists $\rho_0\ssup0$ such that
\begin{align*}
\maxw:=\sup_{\zeta\in\overline{B}_{\rho_0}(0)}L(\zeta)\sinf\infty.
\end{align*}
\end{lemma}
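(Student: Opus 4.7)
The plan is to combine the near-convexity estimate \ref{A3} with a Steinhaus-type measure-theoretic argument to promote local finiteness of $L$ into local boundedness. The key point is that \ref{A2} only gives pointwise finiteness on a ball around $0$, while \ref{A3} is much weaker than convexity, so the classical local boundedness of convex functions does not apply directly.

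First, from \ref{A2} I pick $r>0$ with $\overline{B}_{r}(0)\subset\dom L$, so that $L$ is finite on $\overline{B}_{r}(0)$, and stratify:
\[
M_n:=\bigl\{\xi\in\overline{B}_{r}(0):L(\xi)\le n\bigr\},\qquad n\in\NN.
\]
These are Borel, increase to $\overline{B}_{r}(0)$, and by monotone convergence of Lebesgue measure there exists $N\in\NN$ with $\mathcal{L}^{md}(M_N)>\tfrac12\mathcal{L}^{md}(\overline{B}_{r}(0))$.

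Next, I observe that $\overline{B}_{r}(0)$ is symmetric, hence $-M_N$ is also a Borel subset of $\overline{B}_{r}(0)$ with measure $>\tfrac12\mathcal{L}^{md}(\overline{B}_{r}(0))$. The pigeonhole then gives $\mathcal{L}^{md}(M_N\cap(-M_N))>0$. Consequently, the convolution
\[
f(y):=(\mathbf 1_{M_N}*\mathbf 1_{M_N})(y)=\mathcal{L}^{md}\bigl(M_N\cap(y-M_N)\bigr)
\]
is continuous on $\MM^{m\times d}$ with $f(0)=\mathcal{L}^{md}(M_N\cap(-M_N))>0$. By continuity there is $\rho_0>0$ such that $f(y)>0$ for every $y\in\overline{B}_{2\rho_0}(0)$; equivalently, every such $y$ admits a decomposition $y=\xi+\xi'$ with $\xi,\xi'\in M_N$.

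Finally, given any $\eta\in\overline{B}_{\rho_0}(0)$, I apply the previous step to $y=2\eta\in\overline{B}_{2\rho_0}(0)$, getting $\xi,\xi'\in M_N$ with $\eta=\tfrac12\xi+\tfrac12\xi'$. Using \ref{A3} with $t=\tfrac12$,
\[
L(\eta)\le C\bigl(1+L(\xi)+L(\xi')\bigr)\le C(1+2N),
\]
which gives $\maxw\le C(1+2N)<\infty$. The main obstacle is to produce the decomposition $y=\xi+\xi'$ for $y$ in a neighborhood of $0$ (rather than of an uncontrolled point, as a naive application of Steinhaus would give); this is precisely where the symmetry $-\overline{B}_{r}(0)=\overline{B}_{r}(0)$ together with the pigeonhole step on $M_N$ and $-M_N$ plays the essential role.
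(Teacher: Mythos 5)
Your proof is correct, but it takes a genuinely different route from the paper. The paper adapts the classical convexity argument directly: it fixes a closed ball $\overline{B}_{\rho_0}(0)$ inside the effective domain, takes the finite grid $\S$ of matrices with entries in $\{-\rho_0,0,\rho_0\}$, sets $L^\ast:=\max_{\S}L<\infty$, and then, writing each entry $\xi_{ij}$ as a convex combination of $\sgn(\xi_{ij})\rho_0$ and $0$, applies \ref{A3} coordinate by coordinate to obtain recursively a bound $L(\xi)\le C^\ast(1+L^\ast)$ on all of $\overline{B}_{\rho_0}(0)$. That argument is purely pointwise: it never uses the Borel measurability of $L$, it keeps the radius $\rho_0$ equal to any radius for which $\overline{B}_{\rho_0}(0)\subset\dom L$, and it yields an explicit constant in terms of the values of $L$ at the $3^{md}$ grid points. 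Your Steinhaus-type argument instead exploits measurability: the level-set stratification $M_N$, the symmetrization/pigeonhole step giving $\mathcal{L}^{md}(M_N\cap(-M_N))>0$, and the continuity of $\mathbf 1_{M_N}*\mathbf 1_{M_N}$ to cover a neighborhood of $0$ by sums $\xi+\xi'$ with $\xi,\xi'\in M_N$, after which \ref{A3} with $t=\tfrac12$ closes the bound; all steps check out (in particular the decomposition $2\eta=\xi+\xi'$ and the bound $L(\eta)\le C(1+2N)$ are valid, and only the existence of \emph{some} $\rho_0>0$ is required by the statement, so the uncontrolled smallness of your $\rho_0$ is harmless). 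What each approach buys: yours needs only the midpoint case $t=\tfrac12$ of \ref{A3} but relies essentially on measurability and produces a non-quantitative radius; the paper's needs \ref{A3} for general $t$ but is more elementary, measure-free, and quantitative both in the radius and in the constant.
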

\begin{proof} From \ref{A2} there exists $\rho_0\ssup0$ such that $L(\xi)\sinf\infty$ for all $\xi\in \overline{B}_{\rho_0}(0)$. Each matrix $\xi\in\overline{B}_{\rho_0}(0)$ is identified to the vector
\begin{align*}
\xi=\left(\xi_{11},\cdots,\xi_{1d},\cdots,\xi_{i1},\cdots,\xi_{id},\cdots,\xi_{m1},\cdots,\xi_{md}\right).
\end{align*}
Consider the finite subset \[\S:=\left\{(\xi_{11},\cdots,\xi_{md})\in\MM^{m\times d}: \xi_{ij}\in\{-\rho_0,0,\rho_0\}\right\}\subset \overline{B}_{\rho_0}(0)\] and we define 
$
L^\ast:=\max\limits_{\xi\in\S}L(\xi)\sinf\infty.
$ 

Let 
$
\zeta=\left(\zeta_{11},\cdots,\zeta_{1d},\cdots,\zeta_{i1},\cdots,\zeta_{id},\cdots,\zeta_{m1},\cdots,\zeta_{md}\right)\in\S
$
 and $\xi\in \overline{B}_{\rho_0}(0)$ with $\xi_{ij}=\zeta_{ij}$ for all $i\not=1$ and $j\not=1$. If $\xi_{11}\not=0$ then by \ref{A3} we have  
\begin{align}\label{Eq1: lemma finite integrand}
L(\xi)=&L\left(\mbox{$\frac{\vert\xi_{11}\vert}{\rho_0}$}\sgn(\xi_{11})\rho_0+\left(1-\mbox{$\frac{\vert\xi_{11}\vert}{\rho_0}$}\right)0,\cdots,\xi_{1d},\cdots,\xi_{m1},\cdots,\xi_{md}\right)\\
\le& C\left(1+L(\rho_0,\cdots,\xi_{md})+L(0,\cdots,\xi_{md})\right)\notag\\
\le& 2C\left(1+L^\ast\right)\notag
\end{align}
where $\sgn(\xi_{ij})$ denotes the sign of $\xi_{ij}$. The same upper bound in \eqref{Eq1: lemma finite integrand} holds for $L(\xi)$ when $\xi_{11}=0$. 

Assume now that $\xi_{ij}=\zeta_{ij}$ for all $i\not=1$ and $j\notin\{1,2\}$. Then using \eqref{Eq1: lemma finite integrand} and \ref{A3}, we have
\begin{align*}
L(\xi)=&L\left(\xi_{11},\mbox{$\frac{\vert\xi_{12}\vert}{\rho_0}$}\sgn(\xi_{12})\rho_0+\left(1-\mbox{$\frac{\vert\xi_{12}\vert}{\rho_0}$}\right)0,\cdots,\xi_{1d},\cdots,\xi_{m1},\cdots,\xi_{md}\right)\\
\le& C\left(1+2C(1+L^\ast)+L^\ast\right)\notag\\
\le& C(1+2C)\left(1+L^\ast\right).\notag
\end{align*}
Recursively, we obtain $C^\ast\ssup0$ which depends on $C$ only, such that 
\[
L(\xi)\le C^\ast(1+L^\ast)
\] 
for all $\xi\in\overline{B}_{\rho_0}(0)$. 
\end{proof}
\subsection{Ru-usc functions}\label{ru-usc functions} Let $U\subset\RR^d$ be an open set and let $L:U\times\MM^{m\times d}\to[0,\infty]$ be a Borel measurable function. For each $x\in U$, we denote the effective domain of $L(x,\cdot)$
 by $\LL_x$ and, for each $a\in L^1_{\rm loc}(U;]0,\infty[)$, we define $\Delta_L^a:[0,1]\to]-\infty,\infty]$ by
\[
 \Delta_L^a(t):=\sup_{x\in U}\sup_{\xi\in \LL_x}{L(x,t\xi)-L(x,\xi)\over a(x)+L(x,\xi)}.
 \]
 \begin{definition}\label{ru-usc-Def}
 We say that $L$ is {\em radially uniformly upper semicontinuous (ru-usc)} if there exists $a\in L^1_{\rm loc}(U;]0,\infty[)$ such that
 \[
 \limsup_{t\to 1}\Delta^a_L(t)\leq 0.
 \]
 If moreover $a$ is $1$-periodic then we say that $L$ is {\em periodically ru-usc}.
 \end{definition}
 For a detailed study of ru-usc functions see~\cite{oah-jpm12-radial}.
 \begin{remark}\label{ReMaRk-Delta-ru-usc-remARK1}
 If $L$ is ru-usc then 
 \begin{equation}\label{Delta-ru-usc-remARK1}
 \limsup_{t\to1^-}L(x,t\xi)\leq L(x,\xi)
 \end{equation}
 for all $x\in U$ and all $\xi\in\LL_x$. Indeed, given $x\in U$ and $\xi\in\LL_x$, we have
\[
 L(x,t\xi)\leq\Delta^a_L(t)\left(a(x)+L(x,\xi)\right)+L(x,\xi)\hbox{ for all }t\in[0,1],
\]
 which gives \eqref{Delta-ru-usc-remARK1} since $a(x)+L(x,\xi)>0$ and $\limsup_{t\to 1}\Delta^a_L(t)\leq 0$.
 \end{remark}
 
Define $\widehat{L}:U\times\MM^{m\times d}\to[0,\infty]$ by 
\[
\widehat{L}(x,\xi):=\liminf_{t\to 1^-}L(x,t\xi).
\]

The following lemma gives some properties of $\widehat{L}$ when $L$ is ru-usc (for the proof see also~\cite[Lemma~3.4 and Theorem~3.5~(ii)]{oah-jpm11}). 

\begin{lemma}\label{Extension-Result-for-ru-usc-Functions}
If $L$ is ru-usc and if for every $x\in U$, 
\begin{equation}\label{Homothecie-Assumption-Bis}
t\overline{\LL}_x\subset\LL_x\mbox{ for all }t\in]0,1[
\end{equation}

then
\begin{enumerate}[label=(\roman*)]
\item\label{Extension-Result-for-ru-usc-Functions1} for every $\xi\in\overline{\LL}_x$ it holds $\widehat{L}(x,\xi)=\lim\limits_{t\to 1^-}{L}(x,t\xi)$ for all $x\in U$;

\item\label{Extension-Result-for-ru-usc-Functions2}  $\widehat{L}$ is ru-usc.
\end{enumerate}
\end{lemma}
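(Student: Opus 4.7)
The plan is to derive both parts from a single application of the defining ru-usc inequality
\[
L(x,s\zeta)\le\Delta_L^a(s)(a(x)+L(x,\zeta))+L(x,\zeta),\qquad s\in[0,1],\ \zeta\in\LL_x,
\]
specialized to $\zeta=t\xi$ for $t\in\,]0,1[$. The hypothesis \eqref{Homothecie-Assumption-Bis} guarantees $t\xi\in\LL_x$ whenever $\xi\in\overline{\LL}_x$, so the substitution is legitimate.

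For \ref{Extension-Result-for-ru-usc-Functions1}, fix $x\in U$ and $\xi\in\overline{\LL}_x$. The identity $\liminf_{\tau\to 1^-}L(x,\tau\xi)=\widehat{L}(x,\xi)$ is immediate from the definition, so only $\limsup_{\tau\to 1^-}L(x,\tau\xi)\le\widehat{L}(x,\xi)$ needs work, and the case $\widehat{L}(x,\xi)=\infty$ is trivial. Assuming $\widehat{L}(x,\xi)<\infty$, given $\eta>0$ I pick $\delta>0$ with $\Delta_L^a(s)\le\eta$ for all $s\in[1-\delta,1]$; then for every $\tau\in\,]1-\delta,1[$ and every $t\in\,]\tau,1[$ the factorization $\tau=st$ with $s:=\tau/t\in\,]1-\delta,1[$ puts the ru-usc inequality in the form
\[
L(x,\tau\xi)\le\eta\bigl(a(x)+L(x,t\xi)\bigr)+L(x,t\xi),
\]
where possibly negative values of $\Delta_L^a(s)$ are absorbed using $a(x)+L(x,t\xi)>0$. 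Letting $t\to 1^-$ along a sequence realizing the liminf of $L(x,t\xi)$, then $\tau\to 1^-$, and finally $\eta\to 0$ gives the desired bound.

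For \ref{Extension-Result-for-ru-usc-Functions2}, it suffices to prove $\Delta_{\widehat{L}}^a(s)\le\Delta_L^a(s)$ for every $s\in[0,1]$, which transfers the ru-usc property of $L$ to $\widehat{L}$ with the same weight $a$. Any $\xi$ in the effective domain of $\widehat{L}(x,\cdot)$ lies in $\overline{\LL}_x$, because otherwise $t\xi\notin\LL_x$ for $t$ close to $1$ and $\widehat{L}(x,\xi)=\infty$. Applying the ru-usc inequality to $\zeta=t\xi$ for such $\xi$ and for $t\in\,]0,1[$ gives
\[
L(x,st\xi)\le\Delta_L^a(s)(a(x)+L(x,t\xi))+L(x,t\xi),
\]
and passing to the limit $t\to 1^-$ — using \ref{Extension-Result-for-ru-usc-Functions1} on both $\xi$ and $s\xi$ (the latter being in $\overline{\LL}_x$) — yields
\[
\widehat{L}(x,s\xi)\le\Delta_L^a(s)\bigl(a(x)+\widehat{L}(x,\xi)\bigr)+\widehat{L}(x,\xi).
\]
Dividing by $a(x)+\widehat{L}(x,\xi)>0$ and taking the supremum over $x$ and over $\xi$ in the effective domain of $\widehat{L}(x,\cdot)$ completes the argument.

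The main obstacle is the decomposition trick of part \ref{Extension-Result-for-ru-usc-Functions1}: one must write $\tau=st$ while keeping $t\in\,]0,1[$ strictly (so that $t\xi\in\LL_x$ and $L(x,t\xi)$ is finite) yet forcing $s$ to be as close to $1$ as desired (so that $\Delta_L^a(s)$ is small), uniformly as $\tau$ itself tends to $1$. Once this simultaneous control is achieved, part \ref{Extension-Result-for-ru-usc-Functions2} is essentially a corollary of part \ref{Extension-Result-for-ru-usc-Functions1}.
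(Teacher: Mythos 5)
Your proof is correct and follows essentially the same route as the paper: in (i) you use the same key factorization $\tau=st$ with $t\xi\in\LL_x$ and $\Delta^a_L(s)$ small (the paper organizes this via two sequences $t_n,s_n$ with $t_n/s_n<1$ rather than your $\eta$--$\delta$ bookkeeping), and in (ii) you prove exactly the paper's comparison $\Delta^a_{\widehat L}\le\Delta^a_L$ by passing to the limit in the ru-usc inequality along $t\xi$, your explicit remark that the effective domain of $\widehat L(x,\cdot)$ lies in $\overline{\LL}_x$ being a point the paper leaves implicit.
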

\begin{proof} First we prove~\ref{Extension-Result-for-ru-usc-Functions1}. Fix $x\in U$. We have to prove that for every $\xi\in \overline{\LL}_x$
\begin{align*}
\liminf_{t\to 1^-}L(x,t\xi)=\limsup_{t\to 1^-}L(x,t\xi).
\end{align*}

Fix $\xi\in \overline{\LL}_x$. It suffices to prove that
\begin{equation}\label{ru-usc-AssUmPtIoN1}
\limsup_{t\to 1}\Psi(t)\leq\liminf_{t\to 1}\Psi(t).
\end{equation}
where $\Psi(t):=L(x,t\xi)$ for all $t\in [0,1]$. Without loss of generality we can assume that $\liminf_{t\to 1}\Psi(t)\sinf\infty$. Choose two sequences $\{t_n\}_n, \{s_n\}_n\subset]0,1[$ such that $t_n\to 1$, $s_n\to 1$, $\frac{t_n}{s_n}< 1$ for all $n\in\NN$, and 
\begin{align*}
\limsup_{t\to 1}\Psi(t)&=\lim_{n\to\infty}\Psi(t_n);\\
\liminf_{t\to 1}\Psi(t)&=\lim_{n\to\infty}\Psi(s_n).
\end{align*}
It is possible because, once the sequences $\{t_n\}_n, \{s_n\}_n\subset]0,1[$ satisfying $t_n\to 1$, $s_n\to 1$ choosen, we can extract a subsequence $\{s_{\sigma(n)}\}_n$ such that $\frac{t_n}{s_{\sigma(n)}}< 1$ for all $n\in\NN$. Indeed, it suffices to consider the increasing map $\sigma:\NN\to\NN$ defined by $\sigma(0):=\min\{\nu\in\NN:s_\nu>t_0\}$ and $\sigma(n+1):=\min\{\nu\in\NN:\nu>\sigma(n)\mbox{ and }s_\nu>t_{n+1}\}$ for all $n\in\NN$.

Since \eqref{Homothecie-Assumption-Bis} we have $t_n\xi\in \LL_x$ for all $n\in\NN$, so we can assert that for every $n\in\NN$
\begin{align}\label{ru-usc-AssUmPtIoN2}
\Psi(t_n)=\Psi\left(\frac{t_n}{s_n}s_n\right)&=L\left(x,\frac{t_n}{s_n} (s_n\xi)\right)\\
&\leq \Delta^a_{L}\left(\frac{t_n}{s_n}\right)(a(x)+\Psi(s_n))+\Psi(s_n).\notag
\end{align}
Letting $n\to\infty$ we deduce \eqref{ru-usc-AssUmPtIoN1} from \eqref{ru-usc-AssUmPtIoN2} since $L$ is ru-usc.

It remains to prove~\ref{Extension-Result-for-ru-usc-Functions2}, i.e., $\displaystyle\widehat{L}$ is ru-usc. Fix $t\in[0,1[$ and $\xi\in\overline{\LL}_x$. By ~\ref{Extension-Result-for-ru-usc-Functions1} we can assert that
\begin{align*}
\widehat{L}(x,\xi)&=\lim_{s\to 1}L(x,s\xi)\\
\widehat{L}(x,t\xi)&=\lim_{s\to 1}L(x,s(t\xi)).
\end{align*}
So, we have
\begin{align*}
\begin{split}
\frac{\widehat{L}(x,t\xi)-\widehat{L}(x,\xi)}{a(x)+\widehat{L}(x,\xi)}=\lim_{s\to 1}\frac{L(x,t(s\xi))-L(x,s\xi)}{a(x)+L(x,s\xi)}\le \Delta_{L}^a(t).
\end{split}
\end{align*}
It follows that $\Delta_{\widehat{L}}^a(t)\le  \Delta_{L}^a(t)$. Letting $t\to 1$, we finish the proof. 
\end{proof}
Assume that $U=\RR^d$ and define $\mathcal{H}L:\RR^d\times\MM^{m\times d}\to[0,\infty]$ by 
\[
\mathcal{H}L(\xi):=\inf_{k\geq 1}\inf\left\{\fint_{kY}L(x,\xi+\nabla\phi(x))dx:\phi\in W^{1,p}_0(kY;\RR^m)\right\}.
\]
The following result shows that the ru-usc property is stable by homogenization.

\begin{proposition}\label{Coro-Prop-ru-usc2}
If $L$ is periodically ru-usc then $\mathcal{H} L$ is ru-usc.
\end{proposition}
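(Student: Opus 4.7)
The plan is to exhibit a positive constant $\bar a$ for which $\limsup_{t\to 1^-}\Delta^{\bar a}_{\H L}(t)\le 0$. Since $\H L$ depends only on $\xi$, a constant weight suffices for the ru-usc definition, and the natural candidate---dictated by the periodicity of $a$---is $\bar a:=\fint_{Y}a(x)\,dx\in\,]0,\infty[$.

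First I would fix $t\in\,]0,1[$ and $\xi\in\dom\H L$. For $\eta>0$ I choose $k\in\NN^*$ and $\varphi\in W^{1,p}_0(kY;\RR^m)$ near-optimal for the Braides--M\"uller infimum, so that
\[
\fint_{kY}L(x,\xi+\nabla\varphi(x))\,dx\le\H L(\xi)+\eta.
\]
The finiteness of this integral forces $\xi+\nabla\varphi(x)\in\LL_x$ for a.e.\ $x$, so the pointwise ru-usc inequality for $L$ applies:
\[
L(x,t\xi+t\nabla\varphi(x))\le\Delta^a_L(t)\bigl(a(x)+L(x,\xi+\nabla\varphi(x))\bigr)+L(x,\xi+\nabla\varphi(x))
\]
for a.e.\ $x\in kY$. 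Since $t\varphi\in W^{1,p}_0(kY;\RR^m)$ is admissible for $\H L(t\xi)$, integrating over $kY$ and using $\fint_{kY}a(x)\,dx=\bar a$ (which holds by $1$-periodicity, independently of $k$) yields
\[
\H L(t\xi)\le\Delta^a_L(t)\bigl(\bar a+\H L(\xi)+\eta\bigr)+\H L(\xi)+\eta.
\]

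To conclude, I would let $\eta\to 0$, divide by $\bar a+\H L(\xi)>0$, and pass to the supremum over $\xi\in\dom\H L$, obtaining $\Delta^{\bar a}_{\H L}(t)\le\Delta^a_L(t)$. Sending $t\to 1^-$ then gives the result.

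The one delicate point is the integration step: it is essential that the mean of $a$ over $kY$ equals a single constant $\bar a$ independent of $k$, so that one weight serves uniformly for every competitor in the Braides--M\"uller infimum. This is precisely where the \emph{periodic} hypothesis on the ru-usc weight is used; a merely $L^1_{\rm loc}$ weight would yield a $k$-dependent bound incompatible with the ru-usc definition for $\H L$.
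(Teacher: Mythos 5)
Your argument is essentially the paper's proof: both use $t\varphi$ (or $t\phi_n$) as a competitor in the Braides--M\"uller formula for $\H L(t\xi)$, apply the pointwise ru-usc inequality for $L$, and integrate using the $1$-periodicity of $a$ to obtain the constant weight $\bar a=\langle a\rangle$ and the comparison $\Delta^{\bar a}_{\H L}(t)\le\Delta^a_L(t)$. The only cosmetic difference is that you work with an $\eta$-near-optimal competitor and let $\eta\to0$, while the paper takes a full minimizing sequence $\{k_n,\phi_n\}$ and passes to the $\liminf$; the substance is identical.
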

\begin{proof}
Fix any $t\in[0,1]$ and any $\xi\in\mathcal{H}\LL$, where $\mathcal{H}\LL$ denotes the effective domain of $\mathcal{H}L$. By definition, there exists $\{k_n;\phi_n\}_n$ such that
\begin{enumerate}
\item[$\blacklozenge$] $\displaystyle\phi_n\in W^{1,p}_0(k_nY;\RR^m)\mbox{ for all }n\geq 1$;\\
\item[$\blacklozenge$] $\displaystyle\displaystyle\mathcal{H}L(\xi)=\lim_{n\to\infty}\fint_{k_nY}L(x,\xi+\nabla\phi_n(x))dx$;\\
\item[$\blacklozenge$] $\displaystyle\xi+\nabla\phi_n(x)\in \LL_x\mbox{ for all }n\geq 1\mbox{ and a.a. }x\in k_nY$.
\end{enumerate}
Moreover, for every $n\geq 1$,
\[
\mathcal{H}L(t\xi)\leq\fint_{k_n Y}L(x,t(\xi+\nabla\phi_n(x)))dx
\]
since $t\phi_n\in W^{1,p}_0(k_nY;\RR^m)$, and so
\[
\mathcal{H}L(t\xi)-\mathcal{H}L(\xi)\leq\liminf_{n\to\infty}\fint_{k_nY}
\big(L(x,t(\xi+\nabla\phi_n(x)))-L(x,\xi+\nabla\phi_n(x))\big)dx.
\]
As $L$ is periodically ru-usc it follows that
\[
\mathcal{H}L(t\xi)-\mathcal{H}L(\xi)\leq\Delta^a_L(t)\big(\langle a\rangle+\mathcal{H}L(\xi)\big)
\]
with $\langle a\rangle:=\int_Ya(y)dy$, which implies that $\Delta^{\langle a\rangle}_{\mathcal{H}L}(t)\leq\Delta^a_L(t)$ for all $t\in[0,1]$, and the proof is complete.
\end{proof}

\subsection{Subadditive theorem}\label{subtheorem}
Let $\mathcal{O}_b(\RR^d)$ be the class of all bounded open subsets of $\RR^d$. 

\begin{definition}
Let $\mathcal{S}:\mathcal{O}_b(\RR^d)\to[0,\infty]$ be a set function.
\begin{enumerate}[label=(\roman*)]
\item We say that $\mathcal{S}$ is subadditive if 
\[
\mathcal{S}(A)\leq \mathcal{S}(B)+\mathcal{S}(C)
\]
for all $A,B,C\in\mathcal{O}_b(\RR^d)$ with $B,C\subset A$, $B\cap C=\emptyset$ and $|A\ssetminus B\cup C|=0$.
\item We say that $\mathcal{S}$ is $\ZZ^d$-invariant if
\[
\mathcal{S}(A+z)=\mathcal{S}(A)
\]
for all $A\in\mathcal{O}_b(\RR^d)$ and all $z\in\ZZ^d$.
\end{enumerate}
\end{definition}
Let ${\rm Cub}(\RR^d)$ be the class of all open cubes in $\RR^d$ and let $Y:=]0,1[^d$. The following theorem is due to Akcoglu and Krengel (see~\cite{akcoglu-krengel81}, see also~\cite{licht-michaille02}, and~\cite[Theorem~3.11]{oah-jpm11}).
\begin{theorem}\label{AK-SubadditiveTheorem}
Let $\mathcal{S}:\mathcal{O}_b(\RR^d)\to[0,\infty]$ be a subadditive and $\ZZ^d$-invariant set function for which there exists $c>0$ such that
\begin{equation}\label{Subbaditive-Hypothesis}
\mathcal{S}(A)\leq c|A|
\end{equation}
for all $A\in\mathcal{O}_b(\RR^d)$. Then, for every $Q\in{\rm Cub}(\RR^d)$,
\[
\lim_{\eps\to0}{\mathcal{S}\left({1\over\eps}Q\right)\over\left|{1\over\eps}Q\right|}=\inf_{k\geq 1}{S(kY)\over k^d}.
\]
\end{theorem}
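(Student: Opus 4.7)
The plan is to first prove the statement in the special case where $Q=Y$ and $1/\eps$ ranges through the positive integers, namely
$$\lim_{n\to\infty}\frac{\mathcal{S}(nY)}{n^d}\;=\;\ell,\qquad\text{where }\;\ell:=\inf_{k\ge 1}\frac{\mathcal{S}(kY)}{k^d},$$
and then to deduce the general statement by a sandwich argument that exploits the domination \eqref{Subbaditive-Hypothesis} and the $\ZZ^d$-invariance of $\mathcal{S}$.

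For the integer case, I would fix $k\ge 1$, write $n=qk+s$ with $0\le s<k$, and partition (up to a null set) the open cube $nY$ into the $q^d$ disjoint integer translates $kY+kj$, $j\in\{0,\ldots,q-1\}^d$, plus the open remainder $R_n:=nY\setminus\overline{\cup_j(kY+kj)}$. Iterating subadditivity one translate at a time (each intermediate ``remainder'' is again a bounded open set, so subadditivity applies) and using $\ZZ^d$-invariance to replace every $\mathcal{S}(kY+kj)$ by $\mathcal{S}(kY)$, I obtain
$$\mathcal{S}(nY)\;\le\;q^d\,\mathcal{S}(kY)+\mathcal{S}(R_n)\;\le\;q^d\,\mathcal{S}(kY)+c\,|R_n|,$$
where \eqref{Subbaditive-Hypothesis} is used in the last step and $|R_n|=n^d-q^dk^d\le d\,k\,n^{d-1}$ is of lower order in $n$. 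Dividing by $n^d$ and letting $n\to\infty$ gives $\limsup_n\mathcal{S}(nY)/n^d\le\mathcal{S}(kY)/k^d$, so, taking the infimum over $k$, $\limsup_n\mathcal{S}(nY)/n^d\le\ell$. The matching inequality $\liminf_n\mathcal{S}(nY)/n^d\ge\ell$ is immediate from the definition of $\ell$.

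For an arbitrary $Q\in\Cub(\RR^d)$ of side-length $r>0$, set $n_\eps:=\lfloor r/\eps\rfloor-1$ and $m_\eps:=\lceil r/\eps\rceil+1$, and choose $z_\eps,z_\eps'\in\ZZ^d$ (a bounded shift of the corner of $\tfrac1\eps Q$ rounded to the integer lattice) so that
$$n_\eps Y+z_\eps\;\subset\;\tfrac1\eps Q\;\subset\;m_\eps Y+z_\eps'.$$
Each set-theoretic difference $\tfrac1\eps Q\setminus\overline{n_\eps Y+z_\eps}$ and $(m_\eps Y+z_\eps')\setminus\overline{\tfrac1\eps Q}$ has Lebesgue measure of order $(r/\eps)^{d-1}$; hence two applications of subadditivity together with \eqref{Subbaditive-Hypothesis} and $\ZZ^d$-invariance yield
$$\frac{\mathcal{S}(m_\eps Y)}{m_\eps^d}\cdot\frac{m_\eps^d}{\,|\tfrac1\eps Q|\,}\;-\;o(1)\;\le\;\frac{\mathcal{S}(\tfrac1\eps Q)}{|\tfrac1\eps Q|}\;\le\;\frac{\mathcal{S}(n_\eps Y)}{n_\eps^d}\cdot\frac{n_\eps^d}{\,|\tfrac1\eps Q|\,}\;+\;o(1)$$
as $\eps\to0$. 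Since $n_\eps^d/|\tfrac1\eps Q|\to 1$ and $m_\eps^d/|\tfrac1\eps Q|\to 1$, the integer case established above forces both extremes to the common value $\ell$, which concludes the proof. The main obstacle, and the reason the domination \eqref{Subbaditive-Hypothesis} is genuinely needed, is the careful handling of the boundary layers arising in both steps: since $\mathcal{S}$ is invariant only under integer translations, one must verify that the $\ZZ^d$-translates $kY+kj$ (respectively $n_\eps Y+z_\eps$, $m_\eps Y+z_\eps'$) fit in the prescribed way relative to the ambient cube, with leftover open strips whose volume is of lower order in the side-length and whose contribution is absorbed into the $o(1)$ terms above.
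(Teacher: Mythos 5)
The paper itself gives no proof of Theorem~\ref{AK-SubadditiveTheorem}: it is quoted from Akcoglu--Krengel \cite{akcoglu-krengel81} and Licht--Michaille \cite{licht-michaille02} (see also \cite{oah-jpm11}), so there is no internal argument to compare against; judged on its own, your proof is correct and is essentially the standard deterministic discretization that underlies those references, with the $\ZZ^d$-invariant (periodic) setting making any ergodic-theoretic machinery unnecessary. Your two ingredients are exactly the right ones: the tiling of $nY$ by the $q^d$ integer translates $kY+kj$, with the leftover strip of volume $n^d-q^dk^d\le dkn^{d-1}$ absorbed through \eqref{Subbaditive-Hypothesis}, gives $\limsup_n\mathcal{S}(nY)/n^d\le\mathcal{S}(kY)/k^d$ for each $k$, while $\liminf_n\mathcal{S}(nY)/n^d\ge\inf_k\mathcal{S}(kY)/k^d$ is trivial; and the inclusions $n_\eps Y+z_\eps\subset\frac{1}{\eps}Q\subset m_\eps Y+z_\eps'$ with $n_\eps^d/\lvert\frac{1}{\eps}Q\rvert\to1$ and $m_\eps^d/\lvert\frac{1}{\eps}Q\rvert\to1$ transfer the integer limit to arbitrary dilated cubes, the finiteness needed to subtract in the lower bound being guaranteed by $\mathcal{S}\le c\lvert\cdot\rvert$. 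Two small points you should make explicit. First, the iteration of the (two-set) subadditivity over the translates is legitimate because each intermediate set $nY\setminus\bigcup_{l\le i}\overline{kY+kj_l}$ is open and only boundaries (null sets) are discarded; the degenerate case $k\mid n$ is covered since the empty remainder contributes at most $c\cdot 0=0$. Second, your sandwich uses that the cubes are axis-parallel: for a rotated cube the enclosing integer cube $m_\eps Y+z_\eps'$ would not have volume ratio tending to $1$, and the lower bound would require tiling the complement by $k$-cubes rather than the crude bound $c\lvert\cdot\rvert$. This is consistent with how the theorem is used in the paper, where all cubes are of the form $x+\rho Y$, but the convention on ${\rm Cub}(\RR^d)$ should be stated.
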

Given a Borel measurable function $W:\RR^d\times\MM^{m\times d}\to[0,\infty]$, we define for each $\xi\in\MM^{m\times d}$, $\mathcal{S}_\xi:\mathcal{O}_b(\RR^d)\to[0,\infty]$ by 
\begin{equation}\label{SuBaDDiTiVe-W}
\mathcal{S}_\xi(A):=\inf\left\{\int_AW(x,\xi+\nabla\phi(x))dx:\phi\in W^{1,p}_0(A;\RR^m)\right\}.
\end{equation}
It is easy to see that the set function $\mathcal{S}_\xi$ is subadditive. If we assume that $\W$ is $1$-periodic with respect to the first variable, then $\mathcal{S}_\xi$ is $\ZZ^d$-invariant. Moreover, if $\W$ is such that there exist a Borel measurable function $G:\MM^{m\times d}\to[0,\infty]$ and $\beta>0$ satisfying
\begin{equation}\label{HypotheSis-SubAdditiVe-Particular}
W(x,\xi)\leq\beta(1+G(\xi))
\end{equation}
for all $\xi\in\MM^{m\times d}$, then
\[
\mathcal{S}_\xi(A)\leq\beta(1+G(\xi))|A|
\]
for all $A\in\mathcal{O}_b(\RR^d)$. 

From the above, we see that the following result is a direct consequence of Theorem~\ref{AK-SubadditiveTheorem}.
\begin{corollary}\label{SubadditiveTheorem}
Assume that $\W$ is $1$-periodic with respect to the first variable and satisfies \eqref{HypotheSis-SubAdditiVe-Particular}. Then, for every $\xi\in\GG$
\[
\lim_{\eps\to 0}{\mathcal{S}_\xi\left({1\over\eps}Q\right)\over\left|{1\over\eps}Q\right|}=\inf_{k\geq 1}{\mathcal{S}_\xi(kY)\over k^d}.
\]
\end{corollary}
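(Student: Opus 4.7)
The plan is to verify that, for each fixed $\xi\in\GG$, the set function $\mathcal{S}_\xi$ meets all three hypotheses of Theorem~\ref{AK-SubadditiveTheorem}, and then invoke that theorem directly. Since Theorem~\ref{AK-SubadditiveTheorem} produces the limit on cubes $\tfrac{1}{\eps}Q$ and identifies it with $\inf_{k\geq 1}\mathcal{S}_\xi(kY)/k^d$, the conclusion of the corollary will follow immediately once the three assumptions are checked.

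First, I would verify subadditivity of $\mathcal{S}_\xi$. Given $A,B,C\in\mathcal{O}_b(\RR^d)$ with $B,C\subset A$, $B\cap C=\emptyset$, and $|A\setminus(B\cup C)|=0$, one takes near-optimal test functions $\phi_B\in W^{1,p}_0(B;\RR^m)$ and $\phi_C\in W^{1,p}_0(C;\RR^m)$ for $\mathcal{S}_\xi(B)$ and $\mathcal{S}_\xi(C)$ respectively, and glues them by zero extension into an element of $W^{1,p}_0(A;\RR^m)$; the negligibility of $A\setminus(B\cup C)$ together with additivity of the integral yields $\mathcal{S}_\xi(A)\leq\mathcal{S}_\xi(B)+\mathcal{S}_\xi(C)$. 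Next, the $\ZZ^d$-invariance follows from the change of variables $y=x-z$ for $z\in\ZZ^d$: such a translation sets up a bijection between $W^{1,p}_0(A;\RR^m)$ and $W^{1,p}_0(A+z;\RR^m)$, and the $1$-periodicity of $\W(\cdot,\eta)$ ensures that the corresponding energies coincide, giving $\mathcal{S}_\xi(A+z)=\mathcal{S}_\xi(A)$.

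The key observation that restricts the corollary to $\xi\in\GG$ is the uniform bound \eqref{Subbaditive-Hypothesis}. Testing with $\phi\equiv 0$ in the definition \eqref{SuBaDDiTiVe-W} and using the growth assumption \eqref{HypotheSis-SubAdditiVe-Particular}, one obtains
\[
\mathcal{S}_\xi(A)\leq\int_A \W(x,\xi)\,dx\leq\beta(1+G(\xi))|A|,
\]
so we may take $c:=\beta(1+G(\xi))$, which is finite precisely because $\xi\in\GG$.

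With the three hypotheses verified, Theorem~\ref{AK-SubadditiveTheorem} applies to $\mathcal{S}_\xi$, yielding
\[
\lim_{\eps\to 0}\frac{\mathcal{S}_\xi(\tfrac{1}{\eps}Q)}{|\tfrac{1}{\eps}Q|}=\inf_{k\geq 1}\frac{\mathcal{S}_\xi(kY)}{k^d}
\]
for every $Q\in\mathrm{Cub}(\RR^d)$, which is exactly the conclusion. There is no real obstacle here; the only point that deserves attention is that the constant $c$ in \eqref{Subbaditive-Hypothesis} must be taken to depend on $\xi$ (through $G(\xi)$), which is legitimate since Theorem~\ref{AK-SubadditiveTheorem} is applied separately for each fixed $\xi\in\GG$.
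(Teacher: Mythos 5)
Your proposal is correct and follows exactly the paper's route: the paper likewise verifies (just before the corollary, in sketch form) that $\mathcal{S}_\xi$ is subadditive, $\ZZ^d$-invariant by periodicity, and dominated via the test function $\phi\equiv 0$ with constant $c=\beta(1+G(\xi))$ finite for $\xi\in\GG$, and then applies Theorem~\ref{AK-SubadditiveTheorem}. You simply fill in the details the paper labels ``easy to see,'' so there is nothing to add.
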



\subsection{Local Dirichlet problems associated to a family of functionals}\label{localdirichletproblems}
For any family of (variational) functionals $\{H_\delta\}_{\delta>0}, H_\delta:W^{1,p}(\Omega;\RR^m)\times\O(\Omega)\to [0,\infty]$ we set
\begin{enumerate}
\item[$\blacklozenge$] $\displaystyle\M_\delta(u;O):=\inf\left\{H_\delta(v;O):v\in u+{W}^{1,p}_0(O;\RR^m)\right\}$;\\

\item[$\blacklozenge$] $\displaystyle\Mm(u;O):=\limsup_{\delta\to 0}\M_\delta(u;O)$,
\end{enumerate}
where $v\in u+{W}^{1,p}_0(O;\RR^m)$ means that $v\in W^{1,p}(\Omega;\RR^m)$ and $v-u=0$ in $\Omega\ssetminus O$ (this definition is equivalent to the classical definition of $u+{W}^{1,p}_0(O;\RR^m)$, see for instance~\cite[Chap. 9, p. 233]{hedberg-adams}). It is easy to see that we may also write $\M_\delta(u;O)=\inf\left\{H_\delta(u+\varphi;O):\varphi\in {W}^{1,p}_0(O;\RR^m)\right\}$ for all $\delta\ssup0$ and all $u\in W^{1,p}(\Omega;\RR^m)$.

\medskip

For each $\eps\ssup 0$ and each $O\in\O(\Omega)$, denote by $\mathcal{V}_\eps(O)$ the class of all countable family $\{\overline{\Q}_i:=\overline{\Q}_{\rho_i}(x_i)\}_{i\in I}$ of disjointed (pairwise disjoint) closed balls of $O$ with $x_i\in O$ and $\rho_i=\diam(\Q_i)\in ]0,\eps[$ such that $\left\vert O\ssetminus \mathop{\cup}_{i\in I}Q_i\right\vert=0$. Let $u\in W^{1,p}(\Omega;\RR^m)$ and $\eps\ssup0$. Consider $\Mm^\eps(u;\cdot):\O(\Omega)\to [0,\infty]$ defined by 
\begin{align*}
\Mm^\eps(u;O):=\inf\left\{\sum_{i\in I}\Mm(u;Q_i): \{\overline{\Q}_i\}_{i\in I}\in \mathcal{V}_\eps(O)\right\},
\end{align*}
and define $\Mm^\ast(u;\cdot):\O(\Omega)\to [0,\infty]$ by
\begin{align*}
\Mm^\ast(u;O):=\sup_{\eps> 0}\Mm^\eps(u;O)=\lim_{\eps\to 0}\Mm^\eps(u;O).
\end{align*}

\begin{lemma}\label{mastinquelity} Assume that for each $\delta>0$ and each $v\in W^{1,p}(\Omega;\RR^m)$ the set function $H_\delta(v;\cdot)$ is countably subadditive. Let $(u,O)\in W^{1,p}(\Omega;\RR^m)\times \O(\Omega)$ satisfying 
\begin{align}\label{eq00: localsup}
\sum_{i\in I}\sup_{\delta>0}H_\delta(u;\Q_i)\sinf\infty
\end{align} 
for all disjointed closed balls $\{\overline{\Q}_i\}_{i\in I}$ of $O$ satisfying $\left\vert O\ssetminus \mathop{\cup}_{i\in I}Q_i\right\vert=0$.
Then we have
\begin{align}\label{eq01: localsup}
\Mm(u;O)\le\Mm^\ast(u;O).
\end{align}
\end{lemma}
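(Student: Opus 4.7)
The plan is to establish the sharper statement $\Mm(u;O)\le\Mm^\eps(u;O)$ for every $\eps>0$, from which \eqref{eq01: localsup} follows by taking the supremum over $\eps>0$ in the definition of $\Mm^\ast$. Fix $\eps>0$ and a small $\eta>0$. First I would pick a family $\{\overline{\Q}_i\}_{i\in I}\in\V_\eps(O)$ that is $\eta$-nearly optimal, i.e.\ $\sum_{i\in I}\Mm(u;\Q_i)\le\Mm^\eps(u;O)+\eta$, and for each $(i,\delta)$ choose $v_{i,\delta}\in u+W^{1,p}_0(\Q_i;\RR^m)$ with $H_\delta(v_{i,\delta};\Q_i)\le\M_\delta(u;\Q_i)+\eta\,2^{-i}$. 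The role of hypothesis \eqref{eq00: localsup} is to supply, via the trivial bound $\M_\delta(u;\Q_i)\le H_\delta(u;\Q_i)$, a $\delta$-uniform summable majorant $b_i:=\sup_{\delta>0}H_\delta(u;\Q_i)$ that will justify every later tail estimate and limit interchange.

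Next, for each integer $N\ge1$, I would construct a competitor $v^N_\delta$ by setting $v^N_\delta:=v_{i,\delta}$ on $\Q_i$ for $1\le i\le N$ and $v^N_\delta:=u$ on $\Omega\setminus\bigcup_{i=1}^N \Q_i$. Pairwise disjointness of the closed balls $\overline{\Q}_i$ lets the finitely many functions $v_{i,\delta}-u\in W^{1,p}_0(\Q_i;\RR^m)$ be extended by zero and added to produce $v^N_\delta-u\in W^{1,p}_0(O;\RR^m)$, so $v^N_\delta$ is admissible for $\M_\delta(u;O)$. Then, applying the countable subadditivity of $H_\delta(v^N_\delta;\cdot)$ to the decomposition of $O$ into $\bigcup_{i=1}^N \Q_i$ and the remainder $O\setminus\bigcup_{i=1}^N\overline{\Q}_i$, and noting that $v^N_\delta=u$ on the latter, gives
\begin{align*}
\M_\delta(u;O)\le H_\delta(v^N_\delta;O)\le\sum_{i=1}^N H_\delta(v_{i,\delta};\Q_i)+H_\delta\Big(u;O\setminus\bigcup_{i=1}^N\overline{\Q}_i\Big).
\end{align*}
A second application of countable subadditivity, using that $O\setminus\bigcup_{i=1}^N\overline{\Q}_i$ is covered up to a null set by the tail balls $\{\Q_i\}_{i>N}$, bounds the last term by $\sum_{i>N}H_\delta(u;\Q_i)\le\sum_{i>N}b_i$. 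Combined with the near-optimality of $v_{i,\delta}$, this yields
\begin{align*}
\M_\delta(u;O)\le\sum_{i=1}^N\M_\delta(u;\Q_i)+\eta+\sum_{i>N}b_i.
\end{align*}

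To finish, I would take $\limsup$ as $\delta\to0$, which is legal on the finite sum and gives $\sum_{i=1}^N\Mm(u;\Q_i)$ on the right, then let $N\to\infty$ so that $\sum_{i>N}b_i\to0$ by \eqref{eq00: localsup}, and finally $\eta\to0$, arriving at $\Mm(u;O)\le\sum_{i\in I}\Mm(u;\Q_i)\le\Mm^\eps(u;O)$. The main obstacle in this scheme is the tension between the countable nature of any admissible cover in $\V_\eps(O)$ and the complete absence of a coercivity assumption on the $H_\delta$'s, which prevents a direct countable gluing of the $v_{i,\delta}$ into an admissible global competitor; the truncation to the first $N$ balls together with the uniform summable domination provided by \eqref{eq00: localsup} is what circumvents this difficulty.
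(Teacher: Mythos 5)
Your argument is correct and follows the same overall strategy as the paper's proof (an $\eta$-nearly optimal family in $\mathcal{V}_\eps(O)$, near-minimizers on each ball, subadditivity of $H_\delta$, and the domination \eqref{eq00: localsup}), but it handles the countable family differently. The paper glues all the near-minimizers at once, setting $\varphi_{\delta,\eps}:=\sum_{i\ge1}\varphi_i\mathbb{I}_{\Q_i}$, uses this single function as a competitor for $\M_\delta(u;O)$, and then interchanges $\limsup_{\delta\to0}$ with the infinite series $\sum_{i\ge1}\M_\delta(u;\Q_i)$ by dominated convergence, \eqref{eq00: localsup} supplying the summable majorant. You instead glue only the first $N$ near-minimizers, keep $u$ on the remainder, and absorb the remainder into the tail $\sum_{i>N}\sup_{\delta>0}H_\delta(u;\Q_i)$, which vanishes as $N\to\infty$ by \eqref{eq00: localsup}; the limit interchange then concerns only a finite sum. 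Your variant buys two things: it sidesteps the point, left implicit in the paper, that the countably glued correction belongs to $W^{1,p}_0(O;\RR^m)$ (which does not follow from \eqref{eq00: localsup} alone in the abstract setting of the lemma, where no coercivity of $H_\delta$ is assumed, and is rescued in the application by \ref{A1}), and it replaces the dominated convergence step by an elementary tail estimate. Note that, exactly like the paper, you implicitly use that $H_\delta(v;A)$ depends only on $v$ restricted to $A$, in order to identify $H_\delta(v^N_\delta;\Q_i)$ with $H_\delta(v_{i,\delta};\Q_i)$ and $H_\delta(v^N_\delta;\cdot)$ with $H_\delta(u;\cdot)$ on the remainder; this locality is not among the stated hypotheses but holds for the integral functionals to which the lemma is applied, so you are on the same footing as the paper there.
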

\begin{proof} Fix $(u,O)\in W^{1,p}(\Omega;\RR^m)\times\O(\Omega)$ satisfying~\eqref{eq00: localsup}. Fix $\eps\ssup0$. Choose $\{\overline{\Q}_i\}_{i\ge 1}\in \mathcal{V}_\eps(O)$ such that
\begin{align}\label{eq1:localin}
\sum_{i\ge 1} \Mm(u;\Q_i)\le \Mm^\eps(u;O)+\frac\eps2\le\Mm^\ast(u;O)+ \frac\eps2.
\end{align}
Fix $\delta\ssup0$. For each $i\ge 1$ there exists $\varphi_i\in {W}^{1,p}_0(\Q_i;\RR^m)$ such that
\begin{align}\label{eq2:localin}
H_\delta(u+\varphi_i;\Q_i)\le \frac{\delta}{2^{i+1}}+\M_\delta(u;\Q_i).
\end{align}
Set $\varphi_{\delta,\eps}:=\sum\limits_{i\ge 1}\varphi_i\mathbb{I}_{\Q_i}\in  {W}^{1,p}_0(O;\RR^m)$. By the countable subadditivity of $H_\delta(u+\varphi_{\delta,\eps};\cdot)$ and \eqref{eq2:localin} we have
\begin{align}\label{eq03: localsup}
\Mm(u;O)&\le \limsup_{\delta\to 0}H_\delta(u+\varphi_{\delta,\eps};O)\le \limsup_{\delta\to 0}\sum_{i\ge 1} H_\delta(u+\varphi_i;\Q_i)\le \limsup_{\delta\to 0}\sum_{i\ge 1} \M_\delta(u;\Q_i).
\end{align}
But, for every $\delta\ssup 0$ and every $i\ge 1$ it holds
\begin{align}\label{eq04p: localsup}
\sup_{\eta\in ]0,\delta[}\M_\eta(u;\Q_i)\le \sup_{\eta>0}H_\eta(u;\Q_i).
\end{align}
Applying the dominated convergence theorem and using~\eqref{eq00: localsup} together with~\eqref{eq04p: localsup}, we have
\begin{align}\label{eq04: localsup}
\limsup_{\delta\to 0}\sum_{i\ge 1} \M_\delta(u;\Q_i)\le\sum_{i\ge 1} \limsup_{\delta\to 0}\M_\delta(u;\Q_i)=\sum_{i\ge 1} \Mm(u;\Q_i).
\end{align}
Collecting~\eqref{eq1:localin},~\eqref{eq04: localsup} and \eqref{eq03: localsup} and letting $\eps\to 0$ we obtain \eqref{eq01: localsup}.
\end{proof}
\begin{remark}\label{condition-domination-mstar}
Let $(u,O)\in W^{1,p}(\Omega;\RR^m)\times \O(\Omega)$ and let $\G:W^{1,p}(\Omega;\RR^m)\times \O(\Omega)\to [0,\infty]$ be such that $\G(u,O)\sinf\infty$ and $\G(v,\cdot)$ is a measure for all $v\in W^{1,p}(\Omega;\RR^m)$. If there exists $\beta\ssup0$ such that for every $\delta\ssup 0$ it holds
\begin{align*}
H_\delta(u;U)\le\beta\left(\vert U\vert+\G(u;U)\right)
\end{align*}
for all $U\in \O(O)$, then \eqref{eq00: localsup} is satisfied. Indeed, we have
\begin{align*}
\sum_{i\in I}\sup_{\delta>0}H_\delta(u;\Q_i)\le \sum_{i\in I}\beta\left(\lvert\Q_i\rvert+\G(u;\Q_i)\right)=\beta\left(\vert O\vert+\G(u;O)\right)\sinf\infty.
\end{align*}
\end{remark}

The following result is needed for the proof of Lemma~\ref{localdirichlet}.
\begin{lemma}\label{mast:measure}{\rm(\cite{bouchitte-fonseca-mascarenhas98} and~\cite[Prop. 2.1.]{bellieud-bouchitte00})} Let $u\in W^{1,p}(\Omega;\RR^m)$. If there exists a finite Radon measure $\mu_u$ on $\Omega$ such that for every cube $\Q\in\O(\Omega)$
\begin{align*}
\Mm(u;\Q)\le \mu_u(\Q)
\end{align*}
then $\Mm^\ast(u;\cdot)$ can be extended to a Radon measure $\lambda_u$ on $\Omega$ satisfying $0\le\lambda_u\le \mu_u$.
\end{lemma}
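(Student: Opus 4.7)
The plan is to apply the De Giorgi-Letta extension principle to produce the measure $\lambda_u$ from the set function $\Mm^*(u;\cdot)$, along the lines of Bouchitte-Fonseca-Mascarenhas.

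First I would prove the pointwise upper bound $\Mm^\ast(u;O) \le \mu_u(O)$ for every $O \in \O(\Omega)$. For each $\eps > 0$, the Vitali covering theorem produces a disjointed family $\{\overline{\Q}_i\}_{i \in I} \in \mathcal{V}_\eps(O)$ with $\left| O \setminus \bigcup_i \Q_i \right| = 0$. The hypothesis on $\mu_u$, together with the disjointness of the $\Q_i$, then gives
\[
\Mm^\eps(u;O) \le \sum_{i\in I}\Mm(u;\Q_i) \le \sum_{i\in I}\mu_u(\Q_i) \le \mu_u(O).
\]
Letting $\eps \to 0$ yields $\Mm^\ast(u;O) \le \mu_u(O)$.

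Next I would verify that $\Mm^\ast(u;\cdot)$ satisfies the De Giorgi-Letta axioms on $\O(\Omega)$: monotonicity, subadditivity on open sets, and superadditivity on disjoint open sets. Superadditivity is transparent: any $\mathcal{V}_\eps(O_1\cup O_2)$-partition with $O_1, O_2$ disjoint open splits into admissible partitions of $O_1$ and $O_2$ separately, because a closed ball is connected and cannot meet two disjoint open sets. For subadditivity, one concatenates admissible partitions of $O_1$ and $O_2$; the difficulty of potential overlap is absorbed by an inner approximation argument, using the upper bound of Step 1 to control the residual $\mu_u$-mass.

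With the De Giorgi-Letta axioms in hand, one obtains a unique Borel measure $\lambda_u$ on $\Omega$ coinciding with $\Mm^\ast(u;\cdot)$ on $\O(\Omega)$. The inequality $\lambda_u \le \mu_u$ then follows from Step 1 combined with the outer regularity of $\lambda_u$, and the finiteness of $\mu_u$ ensures $\lambda_u$ is a finite Radon measure.

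The main obstacle I anticipate is a clean proof of countable subadditivity when open sets overlap: naive concatenation of Vitali partitions produces overlapping closed balls, which violates the disjointness built into the definition of $\mathcal{V}_\eps$. I would handle this by first shrinking to subfamilies $\widetilde{O}_k \subset\subset O_k$ with pairwise disjoint closures, partitioning each $\widetilde{O}_k$ by a Vitali-type family, and absorbing the residual $\mu_u$-mass of $O \setminus \bigcup_k \widetilde{O}_k$ into the error term using Step 1; the finiteness of $\mu_u$ guarantees that this residue can be made arbitrarily small.
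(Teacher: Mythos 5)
First, note that the paper does not actually prove this lemma: it is imported verbatim from \cite{bouchitte-fonseca-mascarenhas98} and \cite[Prop.~2.1]{bellieud-bouchitte00}, so what you are offering is a reconstruction of the cited regularization result rather than an alternative to an in-paper argument. Your architecture (domination first, then De Giorgi--Letta axioms, then extension and comparison with $\mu_u$) is indeed the natural way to prove it, and two of your steps are sound as written: the bound $\Mm^\ast(u;O)\le\mu_u(O)$, which only uses the hypothesis on each cube of a Vitali family together with disjointness, and superadditivity on disjoint open sets, since a connected closed cube contained in a union of two disjoint open sets must lie in one of them.

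However, as a proof plan there are genuine gaps precisely at the decisive points. (i) Monotonicity of $\Mm^\ast(u;\cdot)$ is not a formality for this set function: a near-optimal family in $\mathcal{V}_{\eps'}(O_2)$ does not restrict to an admissible family for $O_1\subset O_2$, because cubes crossing $\partial O_1$ are not cubes of $O_1$; one must re-cover each $\Q_i\cap O_1$ by small cubes, bound the added contribution by $\mu_u$ of the $\eps'$-neighbourhood of $\partial O_1$ (disjointness plus the hypothesis), and let $\eps'\to0$ using continuity from above of the finite measure $\mu_u$. In other words monotonicity needs exactly the boundary-layer mechanism you reserve for subadditivity, and your outline gives no argument for it, even though you also need it to compare the sums over the shrunken sets $\widetilde O_k$ with $\Mm^\ast(u;O_k)$. (ii) The De Giorgi--Letta criterion also requires inner regularity, which you omit; it is recoverable (take a compact $K\subset O$ with $\mu_u(O\ssetminus K)\le\delta$, an open $O'$ between $K$ and $O$, and use subadditivity plus Step 1 on $O=O'\cup(O\ssetminus K)$), but it must be stated, otherwise the extension theorem you invoke does not apply. (iii) In the subadditivity step, ``disjoint closures and absorbing the residual $\mu_u$-mass'' hides the two real difficulties: by the very definition of $\mathcal{V}_\eps$, the residual region cannot simply be discarded --- it must still be covered, up to a Lebesgue-null set, by cubes of the family, whose $\Mm$-sum is then controlled by its $\mu_u$-measure; and for that $\mu_u$-measure to be small you must choose the separating interface between $\widetilde O_1$ and $\widetilde O_2$ to be both Lebesgue-null and $\mu_u$-null (for instance among the uncountably many pairwise disjoint level sets of distance functions, of which only countably many can be charged), since $\mu_u$ is an arbitrary finite Radon measure and may charge any particular hypersurface you write down. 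With these ingredients supplied the plan closes, but as stated they constitute the actual content of the lemma rather than routine details.
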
 

\section{Proof of Proposition~\ref{lsc-of-HW}}\label{proof-lsc-hw} The function $\H\W$ is ru-usc since Proposition~\ref{Coro-Prop-ru-usc2}, and so $\widehat{\H\W}$ is ru-usc by Lemma~\ref{Extension-Result-for-ru-usc-Functions}~\ref{Extension-Result-for-ru-usc-Functions2}. Since Remark~\ref{domhw} we have $\dom\H\W=\GG$. It is easy to deduce that $\dom\widehat{\H\W}=\overline\GG$. From Lemma~\ref{Extension-Result-for-ru-usc-Functions}~\ref{Extension-Result-for-ru-usc-Functions1} it holds that 
\begin{align*}
\widehat{\H\W}(\xi)=\lim_{t\to 1^-}\H\W(t\xi)\mbox{ for all }\xi\in\overline{\GG}.
\end{align*}
The proof is complete.
\hfill$\blacksquare$


\section{Proof of Proposition~\ref{main result}~\ref{mr1}}\label{proof-mainresult1}
Let $O\in\O(\Omega)$ and let $u\in W^{1,p}(\Omega;\RR^m)$ be such that $I_-(u;O)\sinf\infty$. It follows that $\nabla u(\cdot)\in\overline{\GG}$ a.e. in $O$ since $\GG$ is convex and the coercivity condition \ref{A1}. 

We have to prove that 
\begin{equation}\label{GHT-eq1}
I_-(u;O)\geq \int_O\widehat{\H\W}(\nabla u(x))dx.
\end{equation}
Consider $\{u_\eps\}_{\eps>0}\subset W^{1,p}(\Omega;\RR^m)$ satisfying $\|u_\eps-u\|_{L^p(\Omega;\RR^m)}\to 0$. 
Without loss of generality we can assume that 
\begin{equation}\label{lower-eq}
\liminf_{\eps\to 0}I_\eps(u_\eps;O)=\lim_{\eps\to 0}I_\eps(u_\eps;O)\sinf\infty, \hbox{ and so }\sup_{\eps>0}I_\eps(u_\eps;O)\sinf\infty. 
\end{equation}
Then for every $t\in[0,1[$ it holds
\begin{equation}\label{Def-I-epseq1}
t\nabla u_\eps(x)\in\GG\mbox{ for all }\eps\ssup0\hbox{ and for a.a. }x\in O
\end{equation}
and, up to a subsequence,
\begin{equation}\label{weak-W1p-subsequence}
u_\eps\wto u\hbox{ in }W^{1,p}(\Omega;\RR^m)
\end{equation}
since $\W$ is $p$-coercive and \eqref{lower-eq}. 

As $p\ssup d$, by the Sobolev compact imbedding and \eqref{weak-W1p-subsequence}, we have, for a subsequence, that
\begin{equation}\label{Embedding-EquA}
\|u_\eps-u\|_{L^\infty(\Omega;\RR^m)}\to 0.
\end{equation}
\subsection*{Step 1: Localization} For each $\eps>0$, we define the nonnegative Radon measure $\mu_\eps$ on $O$ by 
\[
\mu_\eps:=W\left({\cdot\over\eps},\nabla u_\eps(\cdot)\right)dx\lfloor_O.
\]
From (\ref{lower-eq}) we see that $\sup_\eps\mu_\eps(O)<\infty$, and so  there exists a Radon measure $\mu$ on $O$ such that (up to a subsequence) $\mu_\eps\stackrel{\ast}{\wto}\mu$. By Lebesgue's decomposition theorem, we have $\mu=\mu_a+\mu_s$ where $\mu_a$ and $\mu_s$ are nonnegative Radon measures such that $\mu_a\ll dx\lfloor_O$ and $\mu_s\perp dx\lfloor_O$, and from Radon-Nikodym's theorem we deduce that there exists $f\in L^1(O;[0,\infty[)$, given by
\begin{equation}\label{RaDoN-NiKOdYm-ForMULA-HomogeniZAtION-I}
f(x)=\lim_{\rho\to 0}{\mu_a(Q_\rho(x))\over\rho^d}=\lim_{\rho\to 0}{\mu(Q_\rho(x))\over\rho^d}\quad\hbox{ a.e. in }O 
\end{equation}
with $Q_\rho(x):=x+\rho Y$, such that
\[
\mu_a(A)=\int_A f(x)dx\hbox{ for all measurable sets }A\subset O.
\]
\begin{remark}\label{SptMu-s-Remark}
The support of $\mu_s$, ${\spt}(\mu_s)$, is the smallest closed subset $F$ of $O$ such that $\mu_s(O\ssetminus F)=0$. Hence, $O\ssetminus\spt(\mu_s)$ is an open set, and so, given any $x\in O\ssetminus\spt(\mu_s)$, there exists $\hat{\rho}>0$ such that $\overline{Q}_{\hat{\rho}}(x)\subset O\ssetminus\spt(\mu_s)$ with $\overline{Q}_{\hat{\rho}}(x):=x+\hat{\rho}\overline{Y}$. Thus, for a.a. $x\in\Omega$, $\mu(Q_\rho(x))=\mu_a(Q_{\rho}(x))$ for all $\rho>0$ sufficiently small.
\end{remark}
To prove (\ref{GHT-eq1}) it suffices to show that
\begin{equation}\label{LoWeRBoUND-HomoGENIzaTiON-EqUa2}
f(x)\ge{\widehat{\H\W}}(\nabla u(x))\quad\hbox{ a.e. in }O.
\end{equation}
Indeed, by Alexandrov theorem (see Appendix~\ref{alexandrov}) we see that 
\[
\liminf_{\eps\to 0}I_\eps(u_\eps;O)=\liminf_{\eps\to 0}\mu_\eps(O)\geq\mu(O)=\mu_a(O)+\mu_s(O)\geq\mu_a(O)=\int_\Omega f(x)dx.
\]
But, by (\ref{LoWeRBoUND-HomoGENIzaTiON-EqUa2}), we have 
\[
\int_O f(x)dx\geq\int_O{\widehat{\H\W}}(\nabla u(x))dx,
\]
and (\ref{GHT-eq1}) follows.

Fix $t\in ]0,1[$. Let $\t\in ]t,1[$ be such that
\begin{align}\label{deltafini}
\Delta^{a}_\W(\t)\sinf\infty.
\end{align} 

Fix $x_0\in O\ssetminus N$, where $N\subset O$ is a suitable set such that $|N|=0$, and such that
\begin{flalign}
&\nabla u(x_0)\in \overline{\GG};\label{domain-nabla}&\\
&f(x_0)\sinf\infty;\label{fx0}&\\
&G(\t\nabla u(x_0))\sinf\infty;\label{Gt2}&\\
&\lim_{\rho\to 0}{\frac{1}{\rho}}\Vert u-u_{x_0}\Vert_{L^\infty(\Q_{\rho}(x_0);\RR^m)}=0\label{difflowerbound}.&
\end{flalign}

Note that $G(\t\nabla u(\cdot))\sinf\infty$ a.e. in $O$ since Remark~\ref{convexity-eff-domain-remark}~\ref{convexity-eff-domain} and $\nabla u(\cdot)\in\overline{\GG}$ a.e. in $O$. Note also that $u$ is almost everywhere differentiable, i.e., $\lim_{\rho\to 0}{\frac{1}{\rho}}\Vert u-u_{x}\Vert_{L^\infty(\Q_{\rho}(x);\RR^m)}=0$ a.e. in $O$ since $p\ssup d$ (where $u_x(\cdot):=u(x)+\nabla u(x)(\cdot-x)$ is the affine tangent map of $u$ at $x\in O$).

\medskip

We have to prove that $f(x_0)\ge{\widehat{\H\W}}(\nabla u(x_0))$. 

\medskip

As $\mu(O)\sinf\infty$ we have $\mu(\partial \Q_{\rho}(x_0))=0$ for all $\rho\in]0,1]\ssetminus D$ where $D$ is a countable set. From (\ref{RaDoN-NiKOdYm-ForMULA-HomogeniZAtION-I}) and Alexandrov theorem (see Appendix~\ref{alexandrov}) we deduce that
\[
f(x_0)=\lim_{\rho\to 0}{\mu(\Q_\rho(x_0))\over\rho^d}=\lim_{\rho\to 0}\lim_{\eps\to0}{\mu_\eps(\Q_\rho(x_0))\over\rho^d},
\]
and so we are reduced to show that
\begin{equation}\label{GHT-eq1-bis}
\lim_{\rho\to 0}\lim_{\eps\to0}\fint_{\Q_{\rho}(x_0)}W\left({x\over\eps},\nabla u_\eps(x)\right)dx\geq{\widehat{\H\W}}(\nabla u(x_0)).
\end{equation}
Using ru-usc property of $\W$ we can see that
\begin{align*}
\lim_{\rho\to 0}\lim_{\eps\to0}\fint_{\Q_{\rho}(x_0)}W\left({x\over\eps},\nabla u_\eps(x)\right)dx\ge \limsup_{t\to 1^-}\limsup_{\rho\to 0}\limsup_{\eps\to0}\fint_{\Q_{\rho}(x_0)}W\left({x\over\eps},t\nabla u_\eps(x)\right)dx.
\end{align*}
So to prove \eqref{GHT-eq1-bis}, it is enough to show that
\begin{equation}\label{GHT-eq1-bisbisbis}
\limsup_{t\to 1^-}\limsup_{\rho\to 0}\limsup_{\eps\to0}\fint_{\Q_{\rho}(x_0)}W\left({x\over\eps},t\nabla u_\eps(x)\right)dx\ge{\widehat{\H\W}}(\nabla u(x_0)).
\end{equation}

\subsection*{Step 2: Cut-off method to substitute $tu_\eps$ with~$tv_\eps\!\!\in tu_{x_0}\!+\!W^{1,p}_0\!(\Q_{\rho}(\!x_0\!);\!\RR^m)$} Fix any $\eps\ssup0$ and any $s\in]0,1[$. Let $\phi\in W^{1,\infty}_0(\Q_{\rho}(x_0);[0,1])$ be a cut-off function between ${\Q_{s\rho}(x_0)}$ and $\Q_{\rho}(x_0)$ such that $\|\nabla\phi\|_{L^\infty(\Q{\rho}(x_0))}\leq{4\over \rho(1-s)}$. Setting
\[
v_\eps:=\phi u_\eps+(1-\phi)u_{x_0}
\]                        
where $u_{x_0}(\cdot):=u(x_0)+\nabla u(x_0)(\cdot-x_0)$, we have 
\begin{align}\label{FoR-SubaDDiTivity-Argument}
tv_\eps\in tu_{x_0}+W^{1,p}_0(\Q_{\rho}(x_0);\RR^m),
\end{align}
and if $\tau:={t\over\t}\in ]0,1[$ then
\begin{equation}\label{cut-offEq1}
t\nabla v_\eps:=\left\{
\begin{array}{ll}
t\nabla u_\eps&\hbox{on }\Q_{s\rho}(x_0)\\
\tau\left(\phi \t\nabla u_\eps+(1-\phi)\t\nabla u(x_0)\right)+(1-\tau)\Phi_{\eps,\rho}&\hbox{on }S_\rho
\end{array}
\right.
\end{equation}
with $S_{\rho}:=\Q_{\rho}(x_0)\ssetminus \overline{\Q_{s\rho}(x_0)}$ and $\Phi_{\eps,\rho}:={t\over 1-\tau}\nabla\phi\otimes\left(u_\eps-u_{x_0}\right)$. 

\medskip

Using the $G$-growth conditions~\ref{B1} we have
\begin{align*}
&\fint_{\Q_{\rho}(x_0)} W\left({x\over\eps},t\nabla v_\eps\right)dx\\
\leq&\fint_{\Q_{s\rho}(x_0)} W\left({x\over\eps},t\nabla u_\eps\right)dx+{1\over\rho^d}\int_{S_\rho}W\left({x\over\eps},t\nabla v_\eps\right)dx\\
\leq&\fint_{\Q_{\rho}(x_0)} W\left({x\over\eps},t\nabla u_\eps\right)dx+\beta(1-s^d)+{\beta\over\rho^d}\int_{S_\rho}G(t\nabla v_\eps)dx.
\end{align*}
On the other hand, taking \eqref{cut-offEq1} into account and using \ref{A3}, we have
\begin{align*}
G(t\nabla v_\eps)\le& 2C_1\left(1+G(\t\nabla u_\eps)+G(\t\nabla u(x_0))+G(\Phi_{\eps,\rho})\right)\\
\le& 2C_1\left(1+{1\over\alpha}W\left({x\over\eps},\t\nabla u_\eps\right)+G(\t\nabla u(x_0))+G\left(\Phi_{\eps,\rho}\right)\right)
\end{align*}
with $C_1:=C^2+C$. Moreover, it is easy to see that
\begin{align*}
&\left\|\Phi_{\eps,\rho}\right\|_{L^\infty(\Q_{\rho}(x_0);\MM^{m\times d})}\\
\leq&{4t\over(1-\tau)(1-s)}{1\over\rho}\|u-u_{x_0}\|_{L^\infty(\Q_{\rho}(x_0);\RR^m)}+{4t\over\rho(1-\tau)(1-s)}\|u_\eps-u\|_{L^\infty(\Omega;\RR^m)}
\end{align*}
where
\begin{equation}\label{G-conVeX-lim1}
\lim_{\rho\to0}{4t\over(1-t)(1-s)}{1\over\rho}\|u-u_{x_0}\|_{L^\infty(\Q_{\rho}(x_0);\RR^m)}=0
\end{equation}
since \eqref{difflowerbound}, i.e., $\lim_{\rho\to 0}{1\over\rho}\|u-u_{x_0}\|_{L^\infty(\Q_{\rho}(x_0);\RR^m)}=0$, and 
\begin{equation}\label{G-conVeX-lim2}
\lim_{\eps\to0}{4t\over\rho(1-t)(1-s)}\|u_\eps-u\|_{L^\infty(\Omega;\RR^m)}=0\hbox{ for all }\rho>0
\end{equation}
since \eqref{Embedding-EquA}, i.e., $\lim_{\eps\to0}\|u_\eps-u\|_{L^\infty(\Omega;\RR^m)}=0$. 
By Lemma~\ref{lemma finite integrand} we have for some $\rho_0\ssup 0$
\[
\maxw:=\sup_{\xi\in \overline{B}_{\rho_0}(0)}G(\xi)\sinf\infty.
\] 
By \eqref{G-conVeX-lim1} there exists $\overline{\rho}>0$ such that ${4t\over(1-t)(1-s)}{1\over\rho}\|u-u_{x_0}\|_{L^\infty(Q_{\bar\rho}(x_0);\RR^m)}<{\rho_0\over 2}$ for all $\rho\in ]0,\overline{\rho}[$.

Fix any $\rho\in ]0,\overline{\rho}[$. Taking \eqref{G-conVeX-lim2} into account we can assert that there exists $\eps_\rho>0$ such that for every $\eps\in ]0,\eps_\rho[$
\[
G\left(\Phi_{\eps,\rho}\right)\leq \maxw\hbox{ a.e. in }\Q_\rho(x_0).
\] 
Thus, for every $\eps\in ]0,\eps_\rho[$, we have 
\begin{align}\label{End-Step2-Equ}
&\fint_{\Q_{\rho}(x_0)} \W\left({x\over\eps},t\nabla v_\eps\right)dx\\
\le&\fint_{\Q_{\rho}(x_0)} \W\left({x\over\eps},t\nabla u_\eps\right)dx+\left(1-s^d\right)\Big(\beta+\overline{C}\big(1+\maxw+G(\t\nabla u(x_0))\big)\Big)\notag\\
&\hspace{3.75cm}+\frac{\overline{C}}{\alpha}\frac{1}{\rho^d}\int_{S_\rho}\W\left({x\over\eps},\t\nabla u_\eps\right)dx\notag
\end{align}
where $2\beta C_1:=\overline{C}$.
Since $\W$ is periodically ru-usc, for every $\eps\in ]0,\eps_\rho[$ we have the estimate for the last term of \eqref{End-Step2-Equ} shown as follows
\begin{align}\label{End-Step2-Equu}
\frac{1}{\rho^d}\int_{S_\rho}\W\left({x\over\eps},\t\nabla u_\eps\right)dx\le \Delta_{\W}^a(\t)\frac{1}{\rho^d}\int_{S_\rho} a\left(\frac{x}{\eps}\right)dx+\Big(1+\Delta_{\W}^a(\t)\Big)\frac{1}{\rho^d}\mu_\eps(S_\rho).
\end{align}


\subsection*{Step 3: End of the proof} Taking \eqref{FoR-SubaDDiTivity-Argument} into account we see that for every $\eps\in ]0,\eps_\rho[$
\[
\fint_{\Q_{\rho}(x_0)} \W\left({x\over\eps},t\nabla v_\eps\right)dx\geq{1\over\lvert{1\over\eps}\Q_{\rho}(x_0)\rvert} \mathcal{S}_{t\nabla u(x_0)}\left({1\over\eps}\Q_{\rho}(x_0)\right),
\]
where $\mathcal{S}_\xi(A)$ is given by \eqref{SuBaDDiTiVe-W} for all $\xi\in\MM^{m\times d}$ and all open set $A\subset\RR^d$. By \eqref{domain-nabla} we have $\nabla u(x_0)\in\overline{\GG}$, and so $t\nabla u(x_0)\in\GG$ because $\GG$ is convex and $0\in{\rm int}(\GG)$ since \ref{A2} and \ref{A3}. From Corollary~\ref{SubadditiveTheorem} we deduce that
\begin{equation}\label{PassingToTheLimit-1}
\limsup_{\eps\to 0}\fint_{\Q_{\rho}(x_0)} W\left({x\over\eps},t\nabla v_\eps\right)dx\geq\mathcal{H}W(t   \nabla u(x_0))
\end{equation}
for all $\rho\in]0,\overline{\rho}[$. On the other hand, as $\mu_\eps(S_\rho)\leq\mu_\eps(\overline{S}_\rho)$ for all $\eps\in ]0,\eps_\rho[$, $\overline{S}_\rho$ is compact and $\mu_\eps\mwto\mu$, we have $\limsup_{\eps\to0}\mu_\eps(S_\rho)\leq\mu(\overline{S}_\rho)$ by Alexandrov theorem. But $\mu(\overline{S}_\rho)=\mu_a(\overline{S}_\rho)$ since $\overline{S}_\rho\subset\overline{Q}_\rho(x_0)\subset\Omega\ssetminus\spt(\mu_s)$ (see Remark~\ref{SptMu-s-Remark}). Hence, for every $\rho\in ]0,\overline{\rho}[$,
\[
\limsup_{\eps\to0}{1\over\rho^d}\mu_\eps(S_\rho)\leq{1\over\rho^d}\int_{S_\rho}f(x)dx=\fint_{\Q_{\rho}(x_0)}f(x)dx-s^d
\fint_{Q_{s\rho}(x_0)}f(x)dx,
\]
and consequently
\begin{equation}\label{PassingToTheLimit-2}
\limsup_{\rho\to0}\limsup_{\eps\to0}{1\over\rho^d}\mu_\eps(S_\rho)\le(1-s^d)f(x_0).
\end{equation}
Taking \eqref{End-Step2-Equ} and  \eqref{End-Step2-Equu} into account, from \eqref{PassingToTheLimit-1} and \eqref{PassingToTheLimit-2} we deduce that
\begin{align*}
&\H\W(t\nabla u(x_0))\\
\le& \limsup_{\eps\to0}\fint_{\Q_{\rho}(x_0)} W\left({x\over\eps},t\nabla u_\eps\right)dx\\+&\left(1-s^d\right)\left(\beta+\overline{C}\left(1+\maxw+G(\t\nabla u(x_0)+\frac1\alpha\left(\Delta_\W^a(\t)\langle a\rangle+f(x_0)\right)\right)\right).
\end{align*}
Taking \eqref{fx0}, \eqref{Gt2} and \eqref{deltafini} into account and passing to the limits $\rho\to 0$ and $s\to 1$, we obtain
\[
\H\W(t\nabla u(x_0))\le\limsup_{\rho\to0}\limsup_{\eps\to0}\fint_{\Q_{\rho}(x_0)} \W\left({x\over\eps},t\nabla u_\eps\right)dx,
\]
and \eqref{GHT-eq1-bisbisbis} follows when $t\to 1$. \hfill$\blacksquare$


\section{Proof of Proposition~\ref{main result}\ref{mr2}}\label{proof-mainresult2}

For each $(u,O)\in W^{1,p}(\Omega;\RR^m)\times\O(\Omega)$ we recall that 
\begin{align*}
\M_\eps(u;O):=\inf\left\{I_\eps(v;O):v\in W^{1,p}_0(O;\RR^m)\right\}\quad\mbox{ and }\quad
\Mm(u;O):=\limsup_{\eps\to 0} \M_\eps(u;O).
\end{align*}

We give a sketch of the proof which is divided into three steps.

The first step consists in proving that ${I_+(u;O)\le\Mm^\ast(u;O)}$ for all $(u,O)\in W^{1,p}(\Omega;\RR^m)\times \O(\Omega)$. When we assume that $\G(u;O)\sinf\infty$, Lemma~\ref{mast:measure} and the $G$-growth conditions imply that $\Mm^\ast(u;\cdot)$ is a Radon measure which is absolutely continuous with respect to the Lebesgue measure on $O$. Thus, we can write
\begin{align}\label{eq1r}
I_+(u;O)\le \Mm^\ast(u;O)=\int_O\lim_{\rho\to 0}\frac{\Mm^\ast(u;\Q_{\rho}(x))}{\rho^d}dx.
\end{align} 

The second step consists in showing that $\Mm^\ast(u;\cdot)$ is locally equivalent to $\Mm(u;\cdot)$, i.e., for a.a. $x\in O$
\begin{align}\label{eq2r}
\lim_{\rho\to 0}\frac{\Mm^\ast(u;\Q_{\rho}(x))}{\rho^d}=\lim_{\rho\to 0}\frac{\Mm(u;\Q_{\rho}(x))}{\rho^d}.
\end{align}
This is carrying out by measure theoretic arguments (see Step 2).

In the third and last step we replace $u$ by $tu$ with $t\in]0,1[$ and we show, using cut-off techniques, that for a.a. $x\in O$
\begin{align}\label{eq3r}
\lim_{\rho\to 0}\frac{\Mm(tu;\Q_{\rho}(x))}{\rho^d}\le\liminf_{s\to 1}\limsup_{\rho\to 0}\frac{\Mm(tu_x;\Q_{s\rho}(x))}{(s\rho)^d}
\end{align}
where $u_x(\cdot):=u(x)+\nabla u(x)(\cdot-x)$. The right hand term of \eqref{eq3r} is equal to $\H\W(t\nabla u(x))$. Indeed, it is easy to see that for any $\eps,\rho\ssup 0$ and any $x\in O$ we can write 
\begin{align*}
&\frac{\M_\eps(tu_x;\Q_{s\rho}(x))}{(s\rho)^d}\\
=&\frac{1}{\left\vert\frac{1}{\eps}\Q_{s\rho}(x)\right\vert}\inf\left\{\int_{\frac{1}{\eps}\Q_{s\rho}(x)} \W(y,\nabla v(\eps y))dy:v\in tu_x+W^{1,p}_0(\Q_{s\rho}(x);\RR^m)\right\}\\
=&\frac{1}{\left\vert\frac{1}{\eps}\Q_{s\rho}(x)\right\vert}\inf\left\{\int_{\frac{1}{\eps}\Q_{s\rho}(x)} \W(y,t\nabla u(x_0)+\nabla \phi(\eps y))dy:\phi\in W^{1,p}_0(\Q_{s\rho}(x);\RR^m)\right\}\\
=&\frac{1}{\left\vert\frac{1}{\eps}\Q_{s\rho}(x)\right\vert}\S_{t\nabla u(x)}\left(\frac{1}{\eps}\Q_{s\rho}(x)\right)
\end{align*}
which give
\begin{align*}
\frac{\Mm(tu_x;\Q_{s\rho}(x))}{(s\rho)^d}=\H\W(t\nabla u(x))
\end{align*}
since a subadditive argument (see Corollary~\ref{SubadditiveTheorem}). The proof is achieved by taking \eqref{eq2r} and \eqref{eq3r} into account in the inequality \eqref{eq1r}
\begin{align*}
I_+(tu;O)\le \int_O\lim_{\rho\to 0}\frac{\Mm^\ast(tu;\Q_{\rho}(x))}{\rho^d}dx&=\int_O \lim_{\rho\to 0}\frac{\Mm(tu;\Q_{\rho}(x))}{\rho^d}dx\\
&\le \int_O\liminf_{s\to 1}\limsup_{\rho\to 0}\frac{\Mm(tu_x;\Q_{s\rho}(x))}{(s\rho)^d}dx\\
&=\int_O \H\W(t\nabla u(x))dx.
\end{align*}
\bigskip

\subsection*{Step 1: Prove that ${I_+(u;O)\le\Mm^\ast(u;O)}$ when $\G(u;O)\sinf\infty$}\label{subsect1} 
Fix $(u,O)\in W^{1,p}(\Omega;\RR^m)\times\O(\Omega)$ such that $\G(u;O)\sinf\infty$. Without loss of generality we assume that 
\begin{align}\label{eq0: step1}
\Mm^\ast(u;O)\sinf\infty.
\end{align}

Fix $\eps\in ]0,1[$. Choose $\{\overline{\Q}_i\}_{i\in I}\in\mathcal{V}_\eps(O)$ such that
\begin{align}\label{eq1: step1}
\sum_{i\in I}\Mm(u;\Q_i)\le \Mm^\eps(u;O)+\frac\eps2\le \Mm^\ast(u;O)+\frac\eps2.
\end{align}
Fix $\delta\in ]0,1[$. Given any $i\in I$ there exists $v_i\in u+{W}^{1,p}_0(\Q_i;\RR^m)$ such that 
\begin{align}\label{eq2: step1}
I_\delta(v_i;\Q_i)\le \M_\delta(u;\Q_i)+\frac\delta2 \frac{\vert \Q_i\vert}{\vert O\vert}
\end{align}
by definition of $\M_\delta(u;\Q_i)$.
Define $u_{\delta,\eps}\in u+W^{1,p}_0(O;\RR^m)$ by 
\[
u_{\delta,\eps}:=\sum_{i\in I} v_i\mathbb{I}_{\Q_i}+u\mathbb{I}_{\Omega\setminus\mathop{\cup}_{i\in I}\Q_i}.
\] 
From \eqref{eq2: step1} we have that
\begin{align*}
I_\delta(u_{\delta,\eps};O)=\sum_{i\in I}I_\delta(v_i;\Q_i)\le \sum_{i\in I}\M_\delta(u;\Q_i)+\frac\delta2.
\end{align*}
Letting $\delta\to 0$ we obtain
\begin{align}\label{eq5: step1}
\limsup_{\delta\to 0}I_\eps(u_{\delta,\eps};O)\le \limsup_{\delta\to 0}\sum_{i\in I}\M_\delta(u;\Q_i).
\end{align}
By the $G$-growth conditions~\ref{B1} we have $\sup\limits_{\eta\in ]0,\delta[}\M_\eta(u;\Q_i)\le\beta(\vert \Q_i\vert+\G(u;\Q_i))$ for all $\delta\ssup0$ and all $i\in I$ with 
\begin{align*}
\sum_{i\in I}\beta(\vert \Q_i\vert+\G(u;\Q_i))=\beta(\vert O\vert+\G(u;O))\sinf\infty,
\end{align*}
then applying the dominated convergence theorem we have
\begin{align}\label{eq6: step1}
\limsup_{\delta\to 0}\sum_{i\in I}\M_\delta(u;\Q_i)\le\sum_{i\in I}\Mm(u;\Q_i).
\end{align}
Therefore collecting \eqref{eq1: step1}, \eqref{eq5: step1}, \eqref{eq6: step1} and passing to the limit $\eps\to 0$, we have
\begin{align}\label{eq7: step1}
\limsup_{\eps\to 0}\limsup_{\delta\to 0}I_\delta(u_{\delta,\eps};O)\le \Mm^\ast(u;O).
\end{align}
From the $p$-coercivity of $\W$ \ref{A1}, \eqref{eq7: step1} and \eqref{eq0: step1}, we deduce
\begin{align}\label{eq3: step1}
\limsup_{\eps\to 0}\limsup_{\delta\to 0}\int_O \vert\nabla u_{\delta,\eps}\vert^pdx\sinf\infty.
\end{align}
By Poincar\'e inequality there exists $K\ssup 0$ depending only on $p$ and $d$ such that for each $v_i\in u+{W}^{1,p}_0(\Q_i;\RR^m)$
\begin{align*}
\int_{\Q_i}\vert v_i-u\vert^p dx\le K\eps^p\int_{\Q_i}\vert \nabla v_i-\nabla u\vert^pdx
\end{align*}
since $\diam(\Q_i)\sinf\eps$. Summing on $i\in I$ 
we obtain
\begin{align*}
\int_O\vert u_{\delta,\eps}-u\vert^p dx\le 2^{p-1}K\eps^p\left(\int_{O}\vert \nabla u_{\delta,\eps}\vert^p dx+\int_O \vert\nabla u\vert^pdx\right)
\end{align*}
which shows that 
\begin{align}\label{eq9: step1}
\limsup_{\eps\to 0}\limsup_{\delta\to 0}\int_\Omega\vert u_{\delta,\eps}-u\vert^p dx=0
\end{align}
since~\eqref{eq3: step1}.
A simultaneous diagonalization of \eqref{eq7: step1} and \eqref{eq9: step1} gives a sequence $\{u_\delta:=u_{\delta,\eps(\delta)}\}_{\delta>0}\subset u+W^{1,p}_0(O;\RR^m)$ such that $u_\delta\to u$ in $L^p(\Omega;\RR^m)$ and 
\begin{align*}
I_+(u;O)\le \limsup_{\delta\to 0}I_{\delta}(u_{\delta};O)\le \Mm^\ast(u;O)
\end{align*}
since the definition of $I_+(u;O)$. The proof is complete.

\hfill$\blacksquare$


\subsection*{Step 2: Prove that $\displaystyle\Mm^\ast(u;\cdot)$ is locally equivalent to $\Mm(u;\cdot)$}\label{subsect2}
In this step we use the following result from~\cite{bouchitte-fonseca-mascarenhas98, bellieud-bouchitte00}. For a sake of completeness we give a proof.
\begin{lemma}\label{localdirichlet} If $\G(u;O)\sinf\infty$ then we have 
\[
\displaystyle\lim_{\rho\to 0}\frac{\Mm^\ast(u;\Q_\rho(x_0))}{\rho^d}=\lim_{\rho\to 0}\frac{\Mm(u;\Q_\rho(x_0))}{\rho^d} \quad x_0\mbox{-a.e. in }O.
\]
\end{lemma}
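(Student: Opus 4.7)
The plan is to use Lebesgue--Besicovitch differentiation applied to a Radon measure that extends $\Mm^\ast(u;\cdot)$. Since $\G(u;O)<\infty$, the $G$-growth condition~\ref{B1} (with the trivial competitor $v=u$) yields $\M_\delta(u;\Q)\le I_\delta(u;\Q)\le\beta(|\Q|+\G(u;\Q))$ on every cube $\Q\subset O$, hence $\Mm(u;\Q)\le\mu_u(\Q)$ for the finite Radon measure $\mu_u:=\beta(|\cdot|+\G(u;\cdot))$, which is absolutely continuous with respect to Lebesgue measure (its density $\beta(1+G(\nabla u))$ is in $L^1(O)$). Lemma~\ref{mast:measure} then extends $\Mm^\ast(u;\cdot)$ to a Radon measure $\lambda_u\le\mu_u$ on $\Omega$ (so $\lambda_u\ll|\cdot|$), and Lebesgue--Besicovitch differentiation supplies
\[
f(x_0):=\lim_{\rho\to 0}\frac{\lambda_u(\Q_\rho(x_0))}{\rho^d}\quad\hbox{a.e.\ in }O,\qquad\lambda_u(E)=\int_E f\,dx\ \hbox{ for Borel }E\subset O.
\]

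The easy half is immediate: Remark~\ref{condition-domination-mstar} together with Lemma~\ref{mastinquelity} give $\Mm(u;\Q)\le\Mm^\ast(u;\Q)=\lambda_u(\Q)$ on every cube, so $\limsup_{\rho\to 0}\Mm(u;\Q_\rho(x_0))/\rho^d\le f(x_0)$ a.e. For the reverse inequality $\liminf_{\rho\to 0}\Mm(u;\Q_\rho(x_0))/\rho^d\ge f(x_0)$ a.e.\ I would argue by contradiction: if for some $\gamma>0$ the set $A:=\{x\in O:\liminf_{\rho\to 0}\Mm(u;\Q_\rho(x))/\rho^d<f(x)-\gamma\}$ had positive Lebesgue measure, I would enclose it in an open $U\subset O$ with $|U\setminus A|$, and hence (by absolute continuity of $\mu_u$) also $\mu_u(U\setminus A)$, arbitrarily small. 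For each $\eps>0$, Vitali's covering theorem combined with Lebesgue--Besicovitch differentiation supplies a disjoint family $\{\overline{\Q}_i\}_{i\in I_1\cup I_2}\in\mathcal{V}_\eps(U)$ filling $U$ modulo a null set, in which the $I_1$-cubes have centers $x_i\in A$ and satisfy simultaneously $\Mm(u;\Q_i)<(f(x_i)-\gamma)|\Q_i|$ and $f(x_i)|\Q_i|\approx\lambda_u(\Q_i)$, while the residual $I_2$-cubes lie in $U\setminus A$ modulo null. The definition of $\Mm^\eps$ then gives
\[
\Mm^\eps(u;U)\le\sum_{i\in I_1}\Mm(u;\Q_i)+\sum_{i\in I_2}\Mm(u;\Q_i)\le\lambda_u(U)-\gamma|A|+\mu_u(U\setminus A)+o_\eps(1),
\]
and letting $\eps\to 0$ with $\lim_{\eps\to 0}\Mm^\eps(u;U)=\Mm^\ast(u;U)=\lambda_u(U)$ produces $\lambda_u(U)\le\lambda_u(U)-\gamma|A|+o(1)$, the desired contradiction.

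The main obstacle is the joint Vitali construction: the covering must realize simultaneously the ``defect'' inequality $\Mm(u;\Q_i)<(f(x_i)-\gamma)|\Q_i|$ (available only at $A$-points), the Lebesgue-point approximation $f(x_i)|\Q_i|\approx\lambda_u(\Q_i)$, and a residual confined to the narrow shell $U\setminus A$. Standard Vitali--Besicovitch machinery handles this, but the bookkeeping between the parameters $\gamma$, $\eps$ and $|U\setminus A|$ must be carefully ordered so that the three error contributions genuinely vanish in the final diagonal limit, leaving only the $-\gamma|A|$ term to force $|A|=0$.
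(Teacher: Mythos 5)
Your strategy is sound and ultimately rests on the same mechanism as the paper's proof (a Vitali-type fine cover on which the values $\Mm(u;\Q_i)$ are summed and played against $\Mm^\eps(u;\cdot)\to\Mm^\ast(u;\cdot)$), but the execution is genuinely different in the hard direction. The paper never invokes the density $f$ there: it compares $\Mm^\ast$ and $\Mm$ cube by cube, introducing the family $\mathcal{G}_\theta$ of cubes on which $\Mm^\ast>\Mm+\theta|\cdot|$ and the set $\mathcal{N}_\theta$ of points admitting such cubes at arbitrarily small scales; a Vitali cover of $\mathcal{N}_\theta$ is extracted, only finitely many bad cubes $\Q_1,\dots,\Q_j$ are kept, they are completed by a $\mathcal{V}_\delta$-family of the open complement $O\ssetminus\cup_{i\le j}\overline{\Q}_i$ whose $\Mm$-sum is within $\delta$ of $\Mm^\ast$ there, and the additivity of the Radon measure extending $\Mm^\ast$ forces $\theta|V_j|\le\delta$, hence $|V_j|=0$. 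You instead differentiate $\lambda_u$, argue by contradiction on the set $A$ where the liminf falls below $f-\gamma$, and control the residual cubes through the domination $\Mm\le\mu_u$ and uniform absolute continuity of $\mu_u$. Both routes work; what the paper's variant buys is that it needs no measurability of the exceptional set (Vitali yields outer-measure zero directly), no open neighborhood adapted to it, and the finite truncation $V_j$ keeps the complement open for free.

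Two steps in your sketch therefore need the corresponding care. First, the map $x\mapsto\liminf_{\rho\to 0}\Mm(u;\Q_\rho(x))/\rho^d$ has no evident measurability, so ``enclose $A$ in an open $U$ with $|U\ssetminus A|$, hence $\mu_u(U\ssetminus A)$, arbitrarily small'' is not justified as written; either work with outer measure (choose $U\supset A$ open in $O$ with $|U|\le |A|^\ast+\eta$, note $\sum_{i\in I_1}|\Q_i|\ge|A|^\ast$, and bound the residual by $|U|-\sum_{i\in I_1}|\Q_i|\le\eta$ plus the truncation error) or pass to a measurable hull of $A$. Second, when $I_1$ is infinite, $U\ssetminus\cup_{i\in I_1}\overline{\Q}_i$ need not be open, so the ``joint'' family in $\mathcal{V}_\eps(U)$ is not immediate; keep only finitely many $I_1$-cubes carrying all but $\eta$ of their total measure (this is precisely the role of $V_j$ in the paper) and fill the resulting open remainder by a second Vitali family, on whose cubes you apply $\Mm\le\mu_u$. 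With these adjustments, and the ordering of the parameters $\gamma$, $\eps$, $\eta$, $\epsilon'$ that you already anticipate, your argument closes and gives the same conclusion as the paper.
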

\begin{proof}Let $u\in W^{1,p}(\Omega;\RR^m)$ be such that $\G(u;O)\sinf\infty$. Then for each $U\in\O(O)$
\begin{align*}
\Mm(u;U)\le \limsup_{\eps\to 0}\int_U \W(\frac{x}{\eps},\nabla u(x))dx\le\beta\left(\vert O\vert+\int_O G(\nabla u)dx\right)\sinf\infty,
\end{align*}
so, using Lemma~\ref{mast:measure} with $\mu_u:=\beta\left(\vert\cdot\vert+G(\nabla u(\cdot))dx)\right)\lfloor_{O}$, we have $\Mm(u;\cdot)$ is the trace of a Radon measure $\lambda_u$ on $O$ satisfying $0\le\lambda_u\le \mu_u$. Since $\mu_u$ is absolutely continuous with respect to $dx\lfloor_{O}$ the Lebesgue measure on $O$, the limit $\lim_{\rho\to 0}\frac{\lambda_u(\Q_\rho(x_0))}{\rho^d}$ exists for a.a. $x_0\in O$ as the Radon-Nikodym derivative of $\lambda_u$ with respect to $dx\lfloor_{O}$. Moreover, using Lemma~\ref{mastinquelity}, the $G$-growth conditions together with Remark~\ref{condition-domination-mstar} we have
\begin{align*}
\lim_{\rho\to 0}\frac{\Mm^\ast(u;\Q_\rho(x_0))}{\rho^d}\ge \limsup_{\rho\to 0}\frac{\Mm(u;\Q_\rho(x_0))}{\rho^d}\; x_0\mbox{-a.e. in }O.
\end{align*}
It remains to prove that 
\begin{align}\label{eq-1:localsup}
\lim_{\rho\to 0}\frac{\Mm^\ast(u;\Q_\rho(x_0))}{\rho^d}\le \liminf_{\rho\to 0}\frac{\Mm(u;\Q_\rho(x_0))}{\rho^d}\; x_0\mbox{-a.e. in }O.
\end{align}
Fix any $\theta\ssup 0$. Consider the following sets
\begin{align*}
\mathcal{G}_\theta:=&\Big\{ \Q_\rho(x): x\in O,\;\rho >0\;\mbox{ and }\;\Mm^\ast(u;\Q_\rho(x))\ssup \Mm(u;\Q_\rho(x))+\theta\left\vert\Q_\rho(x)\right\vert\Big\},\\
\mathcal{N}_\theta:=&\Big\{x\in O:\forall \delta\ssup0\;\;\exists\rho \in ]0,\delta[\;\;\Q_\rho(x)\in\mathcal{G}_\theta\Big\}.
\end{align*}
It is sufficient to prove that $\mathcal{N}_\theta$ is a negligible set for the Lebesgue measure on $ O$. Indeed, given $x_0\in O\ssetminus\mathcal{N}_\theta$ there exists $\delta_0\ssup 0$ such that \[\Mm^\ast(u;\Q_{\rho}(x_0))\le \Mm(u;\Q_{\rho}(x_0))+\theta\left\vert\Q_{\rho}(x_0)\right\vert\] for all $\rho\in]0,\delta_0[$. Hence 
\begin{align*}
\displaystyle\lim_{\rho\to 0}\frac{\Mm^\ast(u;\Q_\rho(x_0))}{\left\vert\Q_\rho(x_0)\right\vert}\le\liminf_{\rho\to 0}\frac{\Mm(u;\Q_{\rho}(x_0))}{\left\vert\Q_{\rho}(x_0)\right\vert}+\theta,
\end{align*}
then we obtain \eqref{eq-1:localsup} when $\theta\to 0$.

Fix $\delta\ssup 0$. Consider the set 
\begin{align*}
\mathcal{F}_\delta:=\Big\{\overline{\Q}_\rho(x): x\in\mathcal{N}_\theta,\;\rho \in]0,\delta[\;\mbox{ and }\Q_\rho(x)\in\mathcal{G}_\theta\Big\}.
\end{align*}
Using the definition of $\mathcal{N}_\theta$ we can see that $\inf_{\Q\in\mathcal{F}_\delta}\diam\left({\Q}\right)=0$. By the Vitali covering theorem there exists a (pairwise) disjointed countable subfamily $\{\overline{\Q}_i\}_{i\ge 1}$ of $\mathcal{F}_\delta$ such that
\begin{align}\label{eq1: localsup}
\big\vert\mathcal{N}_\theta\ssetminus \mathop{\cup}_{i\ge 1}\Q_i\big\vert=0.
\end{align}
We have $\mathcal{N}_\theta\subset \mathop{\cup}_{i\ge 1}\Q_i\cup \mathcal{N}_\theta\ssetminus \mathop{\cup}_{i\ge 1}\Q_i$. To prove that $\mathcal{N}_\theta$ is a negligible set is equivalent to prove that $\vert V_j\vert=0$ for all $j\ge 1$ where 
\[
V_j:=\mathop{\cup}_{i=1}^j\Q_i.
\]
Fix $j\ge 1$. Let $\{\Q^\prime_i\}_{i\ge 1}\in\mathcal{V}_\delta\big( O\setminus \mathop{\cup}_{i=1}^j\overline{\Q}_i\big)$ satisfying
\begin{align}\label{eq2: localsup}
\sum_{i\ge 1}\Mm(u;\Q^\prime_i)\le\Mm^\ast\big(u; O\ssetminus \mathop{\cup}_{i=1}^j\overline{\Q}_i\big)+\delta.
\end{align}
Recalling that $\Mm^\ast(u;\cdot)$ is the trace on $\O( O)$ of a nonnegative finite Radon measure, we see that 
\begin{align*}
\Mm^\ast(u; O)\ge& \Mm^\ast\big(u; O\ssetminus \mathop{\cup}_{i=1}^{j}\overline{\Q}_i\big)+\Mm^\ast\big(u;V_j\big)\\
=&\Mm^\ast\big(u; O\ssetminus \mathop{\cup}_{i=1}^{j}\overline{\Q}_i\big)+\sum_{1\le i\le j}\Mm^\ast(u;{\Q_i}).
\end{align*}
Since each $\Q_i\in\mathcal{G}_\theta$, by \eqref{eq2: localsup} we have
\begin{align*}
\Mm^\ast(u; O)\ge\sum_{i\ge 1}\Mm(u;\Q^\prime_i)-\delta+\sum_{i=1}^{j}\Mm(u;{\Q_i})+\theta\vert V_j\vert.
\end{align*}
It is easy to see that the countable family $\{\Q^\prime_i:i\ge 1\}\cup\{\Q_i:1\le i\le j\}$ belongs to $\mathcal{V}_\delta( O)$, thus
\begin{align*}
\Mm^\ast(u; O)\ge \Mm^\delta(u; O)+\theta\vert V_j\vert-\delta.
\end{align*}
Letting $\delta\to 0$, we have $\Mm^\delta(u; O)\rightarrow \Mm^\ast(u; O)$, and so $\vert V_j\vert=0$ since $\theta\ssup 0$.
\end{proof}


\subsection*{Step 3: Cut-off technique to locally substitute $tu$ with $tu_{x_0}$ in $\frac{\Mm(\cdot;\Q_{\rho}(x_0))}{\rho^d}$}\label{subsect3}
Fix $(u,O)\in W^{1,p}(\Omega;\RR^m)\times\O(\Omega)$. Let $t\in ]0,1[$ such that  
\begin{align}\label{step3-eq0}
\G(tu;O)\sinf\infty.
\end{align}
Our goal here is to prove 
\begin{align}\label{upperboundlocal}
\lim_{\rho\to 0}\frac{\Mm(tu;\Q_\rho(x_0))}{\rho^d}\le \H\W(t\nabla u(x_0))\quad x_0\hbox{-a.e. in }O.
\end{align}
We claim that it is enough to prove that
\begin{align}\label{Step3-eq1}
\lim_{\rho\to 0}\frac{\Mm(tu;\Q_\rho(x_0))}{\rho^d}\le \liminf_{s\to 1}\limsup_{\rho\to 0}\frac{\Mm(tu_{x_0};\Q_{s\rho}(x_0))}{\left(s\rho\right)^d}\quad x_0\hbox{-a.e. in }O,
\end{align}
where $u_{x_0}(\cdot):=u(x_0)+\nabla u(x_0)(\cdot-x_0)$. Indeed, by Corollary~\ref{SubadditiveTheorem} we have
\begin{align*}
&\liminf_{s\to 1}\limsup_{\rho\to 0}\frac{\Mm(tu_{x_0};\Q_{s\rho}(x_0))}{\left(s\rho\right)^d}\\
=& \liminf_{s\to 1}\limsup_{\rho\to 0}\limsup_{\eps\to 0}\frac{1}{\left\vert\frac1\eps\Q_{s\rho}(x_0)\right\vert}{\S_{t\nabla u(x_0)}\left(\frac1\eps\Q_{s\rho}(x_0)\right)}=\H\W(t\nabla u(x_0))\quad x_0\hbox{-a.e. in }O.
\end{align*} 
since $t\nabla u(\cdot)\in\GG$ a.e. in $O$ by \eqref{step3-eq0}. 

\bigskip

We are reduced to prove \eqref{Step3-eq1}. Consider $x_0\in O$ satisfying
\begin{flalign}
&\lim_{\rho\to 0}\fint_{\Q_{\rho}(x_0)}G(t\nabla u(x))dx=G(t\nabla u(x_0))\sinf\infty\label{Step3-eq12};&\\
&\lim_{\rho\to 0}\fint_{\Q_{\rho}(x_0)}G(\t\nabla u(x))dx=G(\t\nabla u(x_0))\sinf\infty\label{Step3-eq13}.&
\end{flalign}
Fix $s\in ]0,1[$, $\rho\ssup 0$ and $\eps\ssup 0$. 

Choose $v\in u_{x_0}+W^{1,p}_0(\Q_{s\rho}(x_0);\RR^m)$ satisfying
\begin{align*}
I_\eps(tv;\Q_{s\rho}(x_0))\le \M_\eps(tu_{x_0};\Q_{s\rho}(x_0))+\rho^{d+1}.
\end{align*}
Consider a cut-off function $\phi\in W^{1,\infty}_0(\Q_{\rho}(x_0);[0,1])$ between $\Q_{s\rho}(x_0)$ and $\Q_{\rho}(x_0)$ such that $\Vert \nabla \phi\Vert_{L^\infty(\Q_{\rho}(x_0) )}\le\frac{4}{(1-s)\rho}$.

Define $w:=\phi v+(1-\phi)u$ which belongs to $u+W^{1,p}_0(\Q_{\rho}(x_0);\RR^m)$. We have
\begin{align}\label{Step3 eq2}
\M_\eps(tu;\Q_{\rho}(x_0))&\le I_\eps(tv;\Q_{s\rho}(x_0))+I_\eps(tw;\Q_{\rho}(x_0)\ssetminus \Q_{s\rho}(x_0))\\
&\le \M_\eps(tu_{x_0};\Q_{s\rho}(x_0))+\rho^{d+1}+I_\eps(tw;\Q_{\rho}(x_0)\ssetminus \Q_{s\rho}(x_0))\notag.
\end{align}
Let us estimate the last term of \eqref{Step3 eq2} divided by $\rho^d$. 

Set $\tau:={t}{\t}^{-1}$, $\Phi_{\rho}:=\frac{t}{1-\tau}\nabla \phi\otimes(u_{x_0}-u)$ and $S_{\rho}:=\Q_{\rho}(x_0)\ssetminus\overline{\Q_{s\rho}(x_0)}$. Using the $G$-growth conditions, \ref{A3} and \eqref{Step3-eq13} we have
\begin{align}\label{Step3 eq4}
&\limsup_{\rho\to 0}\limsup_{\eps\to 0} \frac{I_\eps(tw;S_{\rho})}{\rho^d}\\
\le&\beta\limsup_{\rho\to 0}\left((1-s^d)+\frac{1}{\rho^d}\int_{S_{\rho}}G\big(\tau(\phi \t\nabla u(x_0)+(1-\phi)\t\nabla u)+(1-\tau)\Phi_{\rho}\big)dx\right)\notag\\
\le& C_1\limsup_{\rho\to 0}\Bigg((1-s^d)+\frac{1}{\rho^d}\int_{S_{\rho}}G(\phi \t\nabla u(x_0)+(1-\phi)\t\nabla u)dx\left.+\frac{1}{\rho^d}\int_{S_{\rho}}G(\Phi_{\rho})dx\right)\notag\\
\le& C_2\limsup_{\rho\to 0}\Bigg((1-s^d)(1+G(\t\nabla u(x_0)))+\frac{1}{\rho^d}\int_{S_{\rho}}G(\t\nabla u)dx\notag\left.+\frac{1}{\rho^d}\int_{S_{\rho}}G(\Phi_{\rho})dx\right)\notag\\
\le& 2C_2\left((1-s^d)(1+G(\t\nabla u(x_0)))+\limsup_{\rho\to 0}\frac{1}{\rho^d}\int_{S_{\rho}}G(\Phi_{\rho})dx\right)\notag.
\end{align}
where $C_1:=\beta(1+C)$ and $C_2:=C_1(1+C)$. 

Since \eqref{difflowerbound}, we choose $\overline{\rho}\ssup 0$ such that for every $\rho\in ]0,\overline{\rho}[$ it holds
\begin{align*}
\frac{4t}{(1-\tau)(1-s)}\frac{1}{\rho}\Vert u_{x_0}-u\Vert_{L^\infty(\Q_\rho(x_0);\RR^m)}\le \frac{\rho_0}{2}.
\end{align*}
It follows that 
\begin{align*}
\Vert \Phi_\rho\Vert_{L^\infty(\Q_\rho(x_0);\MM^{m\times d})}\le \rho_0.
\end{align*}
Using Lemma~\ref{lemma finite integrand} we deduce that
\begin{align}\label{Step3 eq5}
G(\Phi_\rho(\cdot))\le\maxw \;\mbox{ a.e. in }\Q_{\rho}(x_0)
\end{align}
for all $\rho\in ]0,\overline{\rho}[$.
From \eqref{Step3 eq4} and  \eqref{Step3 eq5} we obtain
\begin{align}\label{Step3 eq6}
\limsup_{\rho\to 0}\limsup_{\eps\to 0} \frac{I_\eps(tw;S_{\rho})}{\rho^d}\le2C_2(1-s^d)(1+G(\t\nabla u(x_0)+\maxw).
\end{align}
Taking \eqref{Step3 eq6} into account in the inequality \eqref{Step3 eq2} we obtain by passing to the limit $\eps\to 0$ and $\rho\to 0$
\begin{align*}
\lim_{\rho\to 0}\frac{\Mm(tu;\Q_{\rho}(x_0))}{\rho^d}\le s^d\limsup_{\rho\to 0}\frac{\Mm(tu_{x_0};\Q_{s\rho}(x_0))}{(s\rho)^d}\!+\!2C_2(1-s^d)(1\!+\!G(\t\nabla u(x_0))\!+\!\maxw).
\end{align*}
Letting $s\to 1$, we finally find \eqref{Step3-eq1}.\hfill$\blacksquare$

\section{Proof of Theorem~\ref{main result-cor}}\label{proof-theorem} 
To shorten notation we set
\begin{align*}
\G(\cdot;\Omega):=\G(\cdot), \quad\H I(\cdot;\Omega):=\H I(\cdot), \;\mbox{ and }\;\widehat{\H I}(\cdot;\Omega):=\widehat{\H I}(\cdot).
\end{align*}

Let $u\in W^{1,p}(\Omega;\RR^m)$. The proof of the lower bound is already done in Proposition~\ref{main result}~\ref{mr1}, so it remains to prove the upper bound $I_+(\cdot)\le \widehat{\H I}(\cdot)$. 

Assume without loss of generality that $ \widehat{\H I}(u)\sinf\infty$. 

First, we assume that $\nabla u(\cdot)\in\inte(\GG)$ a.e. in $\Omega$. In this case, from Proposition~\ref{lsc-of-HW}, coercivity conditions \ref{B1} and \ref{A3} we have 
\begin{align*}
\widehat{\H I}(u)=\int_\Omega \lim_{t\to 1^-}\H\W(t\nabla u(x))dx\ge \alpha\int_\Omega \liminf_{t\to1^-}G(t\nabla u(x))dx.
\end{align*}
But $G$ is $W^{1,p}$-quasiconvex, so it is continuous in $\inte(\GG)$ (see for instance~\cite{fonseca88} or~\cite{dacorogna08}). It follows that
\begin{align*}
\liminf_{t\to 1^-} G(t\nabla u(x))=G(\nabla u(x))\mbox{ a.e. in }\Omega
\end{align*}
since we assumed that $\nabla u(\cdot)\in\inte(\GG)$ a.e. in $\Omega$. Therefore
\begin{align*}
\alpha\G(u)\le \widehat{\H I}(u)\sinf\infty.
\end{align*}
So, using \ref{A3} and \ref{A2} we have $\G(tu)\le C\left(\vert\Omega\vert(1+G(0))+\G(u)\right)\sinf\infty$ for all $t\in ]0,1[$. Applying Proposition~\ref{main result}~\ref{mr2} we have for every $t\in ]0,1[$
\begin{align*}
I_+(tu)\le\H I(tu).
\end{align*}
Since \ref{B1}, we have for any $t\in ]0,1[$ that $\H\W(t\nabla u(\cdot))\le \beta(1+G(t\nabla u(\cdot))$, so applying Lebesgue dominated convergence theorem and Proposition~\ref{lsc-of-HW} we obtain
\begin{align*}
\liminf_{t\to 1^-}\H I(tu)\le \int_\Omega \lim_{t\to 1^-}\H \W(t\nabla u(x))dx=\widehat{\H I}(u).
\end{align*}
Using the lower semicontinuity of $I_+$ with respect to the strong topology of $L^p(\Omega;\RR^m)$, we then have
\begin{align*}
I_+(u)\le\liminf_{t\to 1^-}I_+(tu)\le\liminf_{t\to 1^-}\H I(tu)\le\widehat{\H I}(u).
\end{align*}

Now, we assume the general case $\nabla u(\cdot)\in\overline{\GG}$ a.e. in $\Omega$. We have for every $t\in]0,1[$ that $t\nabla u(\cdot)\in\inte(\GG)$ a.e. in $\Omega$ since $\GG$ is convex with $0\in\inte(\GG)$. We can apply the first part of the proof to get
\begin{align*}
I_+(tu)\le \widehat{\H I}(tu)
\end{align*}
for all $t\in ]0,1[$. But $\widehat{\H\W}$ is ru-usc since Proposition~\ref{lsc-of-HW}, so, for every $t\in]0,1[$
\begin{align*}
\widehat{\H I}(tu)\le \Delta_{\widehat{\H\W}}^{\langle a\rangle}(t)\left(\langle a\rangle\vert\Omega\vert+\widehat{\H I}(u)\right)+\widehat{\H I}(u).
\end{align*}
Letting $t\to 1$ and using the lower semicontinuity of $I_+$, we obtain the desired result.\hfill$\blacksquare$

\section{A two dimensional example}\label{example} We show, when $m=d=2$, how to construct an example of $\W$ with $G$-growth conditions satisfying the assumptions of Theorem~\ref{main result-cor}. 

Consider a set $\GG\subset\MM^{2\times 2}$ with the following properties:
\begin{hyp}
\item\label{exa1} $0\in\inte(\GG)$;
\item\label{exa2} $\GG$ is convex;
\item\label{exa3} $\det(I+\xi)\ssup 0$ for all $\xi\in\GG$;
\item\label{exa4} ${\rm tr}\left({\rm cof}(I+\xi)^\intercal(I+\zeta)\right)\ssup 0$ for all $\xi,\zeta\in\GG$,
\end{hyp}
where $I$ is the identity matrix and ${\rm cof}(F)$ is the matrix of cofactors of $F\in\MM^{2\times 2}$. 
\begin{remark}The set $I+\GG$ can be interpreted as {\em internal constraints} of an elastic material. However, the properties of $I+\GG$ do not fit with the requirements of the theory of internal constraints as developed by \cite{gurtin-podio73}. Indeed, due to the frame indifference principle, we should have 
\begin{align}\label{frame-indifferent-G}
{\rm SO(2)}(I+\GG)\subset I+\GG,
\end{align}
but this is not true. Assume that a such $I+\GG$ satisfying \eqref{frame-indifferent-G} exists then ${\rm SO(2)}\subset I+\GG$ since \ref{exa1}. Choose any rotation matrix $I+\xi$ with angle $\theta\in [\frac{\pi}{2},\frac{3\pi}{2}]$ and $I+\zeta=I$, i.e., $\zeta=0$, then 
\begin{align*}
{\rm tr}\left({\rm cof}(I+\xi)^\intercal(I+\zeta)\right)={\rm tr}\left(I+\xi \right)\le 0,
\end{align*}
so \ref{exa4} cannot be satisfied.
\end{remark}
Let $g:\MM^{2\times 2}\to [0,\infty]$ be defined by
\[
g(\xi):=\left\{
\begin{array}{cl}
\displaystyle h({\det(I+\xi)}) &\mbox{ if }\xi\in \GG\\ \\
\infty&\mbox{ otherwise}
\end{array}
\right.
\]
where $h:]0,\infty[\to [0,\infty]$ is a nonincreasing convex function satisfying for every $\lambda\in]0,1[$ and every $x\in ]0,\infty[$
\begin{align}
h(\lambda x)\le\frac{1}{\lambda^r}h(x)\label{h}
\end{align}
where $r\leq 1$. Note that the function $h$ can be chosen to satisfy $\lim\limits_{x\to 0}h(x)=\infty$.

\begin{proposition}\label{gpoly} We have 
\begin{enumerate}[label={(\roman{*})}]
\setcounter{enumi}{0}
\item\label{exa5} $\g$ is polyconvex where $\g(\cdot):=g(\cdot-I)$;
\item\label{exa6} for every $\xi,\zeta\in\GG$ and every $\lambda\in ]0,1[$ it holds 
\[
g(\lambda\xi+(1-\lambda)\zeta)\le g(\xi)+g(\zeta);
\]
\item\label{exa7} $g$ is ru-usc.
\end{enumerate}
\end{proposition}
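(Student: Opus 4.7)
My plan is to handle the three claims separately. The common engine throughout is the elementary two-dimensional polarization identity
\[
\det(\lambda A + (1-\lambda) B) = \lambda^2 \det A + (1-\lambda)^2 \det B + \lambda(1-\lambda)\, \mathrm{tr}\bigl(\mathrm{cof}(A)^\intercal B\bigr),
\]
valid for every $A, B \in \MM^{2\times 2}$ and easily verified by direct expansion.

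For \ref{exa5}, I would exhibit polyconvexity by producing an explicit convex $\Phi : \MM^{2\times 2} \times \RR \to [0,\infty]$ with $\g(F) = \Phi(F, \det F)$. Take $\Phi(F, \delta) := \bar h(\delta) + \chi_{I+\GG}(F)$, where $\bar h$ extends $h$ by $+\infty$ on $]-\infty, 0]$ and $\chi_{I+\GG}$ is the $\{0,\infty\}$-valued convex indicator. The set $I+\GG$ is convex by \ref{exa2}, so $\chi_{I+\GG}$ is convex; and $\bar h$ is convex on $\RR$ because $h$ is convex and nonincreasing on $]0,\infty[$. Consequently $\Phi$ is convex as a sum of two convex functions in separate variables. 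On $I+\GG$ one has $\det F > 0$ by \ref{exa3}, hence $\Phi(F, \det F) = h(\det F) = \g(F)$; off $I+\GG$ both sides equal $+\infty$.

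For \ref{exa6}, let $A := I + \xi$, $B := I + \zeta$, and $T := \mathrm{tr}(\mathrm{cof}(A)^\intercal B) > 0$ (by \ref{exa4}). The polarization identity gives $\delta := \det(\lambda A + (1-\lambda) B) = \lambda^2 \det A + (1-\lambda)^2 \det B + \lambda(1-\lambda) T$. The key move is to split the cross term symmetrically and rewrite $\delta$ as a genuine convex combination,
\[
\delta \;=\; \lambda \bigl(\lambda \det A + \tfrac12 (1-\lambda) T\bigr) + (1-\lambda) \bigl((1-\lambda) \det B + \tfrac12 \lambda T\bigr).
\]
Applying in turn convexity of $h$, monotonicity of $h$ (to drop the nonnegative $T$-remainders), and the growth bound \eqref{h}, one obtains
\[
h(\delta) \le \lambda h(\lambda \det A) + (1-\lambda) h((1-\lambda) \det B) \le \lambda^{1-r} h(\det A) + (1-\lambda)^{1-r} h(\det B).
\]
Since $r \le 1$, the factors $\lambda^{1-r}$ and $(1-\lambda)^{1-r}$ are both $\le 1$ on $]0,1[$, giving $g(\lambda\xi+(1-\lambda)\zeta) \le g(\xi) + g(\zeta)$.

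For \ref{exa7}, I apply the polarization identity with $\zeta = 0 \in \GG$ (valid by \ref{exa1}) and $t \in ]0,1[$, so that $A = I+\xi$ and $B = I$:
\[
\det(I + t\xi) = t^2 \det(I+\xi) + (1-t)^2 + t(1-t)\, \mathrm{tr}\bigl(\mathrm{cof}(I+\xi)\bigr) \;\ge\; t^2 \det(I+\xi),
\]
the lower bound coming from \ref{exa4} with $\zeta = 0$. Then $h$ nonincreasing and \eqref{h} yield $g(t\xi) \le t^{-2r} g(\xi)$, so taking the constant weight $a \equiv 1$ one has $\Delta^1_g(t) \le \max(0, t^{-2r} - 1) \to 0$ as $t \to 1^-$, which is ru-usc. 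The one genuinely delicate point in the whole argument is the symmetric splitting of the cross term used in \ref{exa6}: it must be arranged in exactly that way so that convexity of $h$ applies to a true convex combination and the constant on the right-hand side comes out to be precisely $1$, rather than some larger $C > 1$ as in the general hypothesis \ref{A3}.
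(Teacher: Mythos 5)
Your argument is correct and follows essentially the same route as the paper: an explicit polyconvex representation $\g(F)=\varphi(F,\det F)$ for \ref{exa5}, the two-dimensional polarization identity combined with monotonicity and convexity of $h$ and the growth bound \eqref{h} for \ref{exa6}, and the specialization $\zeta=0$ for \ref{exa7}. The one thing to push back on is your closing remark that the symmetric splitting of the cross term in \ref{exa6} is the genuinely delicate step and must be arranged in exactly that way. It need not be: since the cross term $\lambda(1-\lambda)T$ is nonnegative by \ref{exa4} and $h$ is nonincreasing, you may drop it at once to get $h(\delta)\le h\bigl(\lambda^2\det A+(1-\lambda)^2\det B\bigr)$, and then observe that $\lambda^2\det A+(1-\lambda)^2\det B=\lambda\,(\lambda\det A)+(1-\lambda)\,\bigl((1-\lambda)\det B\bigr)$ is already a genuine convex combination, so one application of convexity of $h$ lands you at $\lambda h(\lambda\det A)+(1-\lambda)h\bigl((1-\lambda)\det B\bigr)$ — the same intermediate expression you reach. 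That is the paper's order of operations (monotonicity, then convexity); yours reverses it at the cost of one extra use of monotonicity, but both are fine. In either version the constant $1$ (rather than some $C>1$ as in \ref{A3}) comes from $\lambda^{1-r},(1-\lambda)^{1-r}\le 1$ since $r\le 1$, not from how the cross term is split. A small plus of your treatment of \ref{exa5}: extending $h$ to $\bar h=+\infty$ on $\,]-\infty,0]$ makes the auxiliary convex function defined on all of $\MM^{2\times2}\times\RR$, which is tidier than leaving $\varphi(F,s)$ with $s\le0$ implicitly undefined as the paper does.
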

\begin{proof} We have~\ref{exa5} because we can write $\g(F)=\varphi(F,\det(F))$ with $\varphi:\MM^{2\times 2}\times \RR\to [0,\infty]$ is the convex function defined by
\[
\varphi(F,s):=\left\{
\begin{array}{cl}
\displaystyle h(s) &\mbox{ if }F\in I+\GG\\ \\
\infty&\mbox{ otherwise.}
\end{array}
\right.
\]

Now, we show~\ref{exa6}. Fix $\xi,\zeta\in\GG$ and $\lambda\in ]0,1[$. Using~\ref{exa3},~\ref{exa4}, and properties of $h$, we have  
\begin{align}\label{indet}
&g(\lambda\xi+(1-\lambda)\zeta))\\
=&h({\det(\lambda (I+\xi)+(1-\lambda)(I+\zeta))})\notag\\
=&h(\lambda^2\det(I+\xi)+(1-\lambda)^2\det(I+\zeta)+\lambda(1-\lambda){\rm tr}\left({\rm cof}(I+\xi)^\intercal(I+\zeta)\right))\notag\\
\le&h(\lambda^2\det(I+\xi)+(1-\lambda)^2\det(I+\zeta))\notag\\
\le&\lambda h(\lambda\det(I+\xi))+(1-\lambda)h((1-\lambda)\det(I+\zeta))\notag\\
\le&\lambda^{1-r} h(\det(I+\xi))+(1-\lambda)^{1-r}h(\det(I+\zeta))\notag\\
\le&g(\xi)+g(\zeta)\notag.
\end{align}

From \eqref{indet} and properties of $h$ we have for every $\xi\in\GG$ and every $t\in ]0,1[$
\begin{align*}
g(t\xi)=& h(t^2\det(I+\xi)+(1-t)^2+t(1-t){\rm tr}\left(I+\xi\right))\\
\le&h(t^2\det(I+\xi))\\
\le&\frac{1}{t^{2r}} h(\det(I+\xi))\\
=&\frac{1}{t^{2r}} h(\det(I+\xi))-h(\det(I+\xi))+g(\xi)\\
\le&\frac{1-t^{2r}}{t^{2r}}(1+g(\xi))+g(\xi)
\end{align*}
which implies $\Delta^1_g(t)\le \frac{1-t^{2r}}{t^{2r}}$. Letting $t\to 1$ we obtain~\ref{exa7}.
\end{proof}
\begin{remark} We may think that the function $\g$ could be convex but it is not the case in general, see Remark~\ref{noconvex}. 
\end{remark}
We define the function $G:\MM^{2\times 2}\to[0,\infty]$ by
\begin{equation}\label{Intro-Application-Eq1}
G(\xi):=\left\{
\begin{array}{ll}
|\xi|^p+g(\xi)&\hbox{if }\xi\in\GG\\
\infty&\hbox{otherwise.}
\end{array}
\right.
\end{equation}
Using~\ref{exa1}, Proposition~\ref{gpoly}~\ref{exa5} and~\ref{exa6} it is easy to see 
\begin{lemma} The function $G$ defined in \eqref{Intro-Application-Eq1} satisfies \ref{A2}, \ref{A3} and \ref{A0}.
\end{lemma}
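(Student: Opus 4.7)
The plan is to check the three conditions \ref{A2}, \ref{A3}, \ref{A0} one by one, leaning on \ref{exa1}--\ref{exa4} and Proposition~\ref{gpoly}. The verification of \ref{A2} is immediate: since $0\in\inte(\GG)$ by \ref{exa1} and since $\dom G=\GG$ by construction, the effective domain of $G$ contains a neighborhood of the origin.

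For \ref{A3}, I would first observe that whenever $\xi\notin\GG$ or $\zeta\notin\GG$ the right-hand side equals $+\infty$, so the inequality is trivial. It therefore suffices to treat $\xi,\zeta\in\GG$ and $t\in{]0,1[}$. Convexity of $\GG$ (assumption \ref{exa2}) ensures $t\xi+(1-t)\zeta\in\GG$, so $G(t\xi+(1-t)\zeta)=|t\xi+(1-t)\zeta|^p+g(t\xi+(1-t)\zeta)$. The first summand is controlled by $t|\xi|^p+(1-t)|\zeta|^p\le|\xi|^p+|\zeta|^p$ (convexity of $|\cdot|^p$), and the second is controlled by $g(\xi)+g(\zeta)$ thanks to Proposition~\ref{gpoly}~\ref{exa6}. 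Adding the two estimates yields $G(t\xi+(1-t)\zeta)\le G(\xi)+G(\zeta)$, so \ref{A3} holds with $C=1$.

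The main point is \ref{A0}, $W^{1,p}$-quasiconvexity of $G$. I would argue by splitting $G$ as the sum of $|\cdot|^p$ and $g$, and by noting that a sum of two $W^{1,p}$-quasiconvex functions is $W^{1,p}$-quasiconvex (just add the two defining inequalities along a common test function). The function $|\cdot|^p$ is convex, hence quasiconvex. For $g$, Proposition~\ref{gpoly}~\ref{exa5} says that $\g(\cdot)=g(\cdot-I)$ is polyconvex, and polyconvexity implies quasiconvexity in the sense considered here (with possibly infinite values the result still holds because polyconvex functions of a gradient satisfy Jensen's inequality on Jacobian minors via the null Lagrangian property for $W^{1,p}$, $p\ge d=2$). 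Since quasiconvexity is translation-invariant in the matrix variable, $g(\cdot)=\g(I+\cdot)$ is itself $W^{1,p}$-quasiconvex: for any $\varphi\in W^{1,p}_0(Y;\RR^m)$,
\[
\int_Y g(\xi+\nabla\varphi(x))\,dx=\int_Y\g(I+\xi+\nabla\varphi(x))\,dx\ge\g(I+\xi)=g(\xi).
\]
Combined with the corresponding inequality for $|\cdot|^p$, this gives \ref{A0}.

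The only genuine obstacle is the last point, and even there the difficulty is not in the argument but in invoking ``polyconvex implies $W^{1,p}$-quasiconvex'' with an $\infty$-valued integrand: one has to check that the standard Jensen-type estimate on $\det(I+\xi+\nabla\varphi)$ via the null-Lagrangian identity $\fint_Y \det(I+\xi+\nabla\varphi)\,dx=\det(I+\xi)$ remains valid for $\varphi\in W^{1,p}_0(Y;\RR^m)$ with $p>d=2$, which is classical. Everything else is a routine verification.
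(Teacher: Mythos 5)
Your proof is correct and follows exactly the route the paper indicates (the paper merely states that the lemma is ``easy to see'' from \ref{exa1} and Proposition~\ref{gpoly}~\ref{exa5} and~\ref{exa6}, which are precisely the ingredients you use: \ref{exa1} for \ref{A2}, convexity of $|\cdot|^p$ plus Proposition~\ref{gpoly}~\ref{exa6} for \ref{A3} with $C=1$, and convexity of $|\cdot|^p$ plus polyconvexity of $\g$ with the null-Lagrangian identity for $\det$ when $p>d=2$ for \ref{A0}). The one detail you leave implicit — quasiconvexity of $g$ at points $\xi\notin\GG$, where $g(\xi)=\infty$ — is also fine: since $\GG$ is open convex and $\int_Y\nabla\varphi\,dx=0$, no $\varphi\in W^{1,p}_0(Y;\RR^m)$ can make $\xi+\nabla\varphi\in\GG$ a.e.\ (a supporting hyperplane at $\xi$ would give a contradiction), so the right-hand side of the quasiconvexity inequality is automatically $+\infty$.
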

Let $\W:\RR^2\times\MM^{2\times 2}\to[0,\infty]$ be defined by
\begin{equation}\label{Appli-d=2-Eq1}
\W(x,\xi):=\left\{
\begin{array}{ll}
\Phi(x,\xi)+g(\xi)&\hbox{if }\xi\in\GG\\
\infty&\hbox{otherwise,}
\end{array}
\right.
\end{equation}
where $\Phi:\RR^2\times\MM^{2\times 2}\to[0,\infty]$ is a quasiconvex function, $1$-periodic with respect to its first variable and of $p$-polynomial growth, i.e., there exist $c,C>0$ such that
\begin{equation}\label{Intro-Application-p-polynomial-Growth}
c|\xi|^p\leq \Phi(x,\xi)\leq C(1+|\xi|^p)
\end{equation}
for all $(x,\xi)\in\RR^2\times\MM^{2\times 2}$. 

The following proposition shows that such a $\W$ is consistent with the assumptions of Theorem~\ref{main result-cor} as well as with the two basic conditions of hyperelasticity, i.e., the non-interpenetration of the matter and the necessity of an infinite amount of energy to compress a finite volume of matter into zero volume.

\begin{proposition}\label{propW}We have 
\begin{enumerate}[label={(\roman{*})}]
\setcounter{enumi}{0}
\item $\W$ is $1$-periodic with respect to the first variable;
\item $\W$ satisfies~\ref{B1} and~\ref{A1} with $G$ given by \eqref{Intro-Application-Eq1};
\item for every $(x,\xi)\in\RR^d\times\GG$, $\W(x,\xi)<\infty$ if and only if $\det(I+\xi)>0$;
\item for every $x\in\RR^d$, $\W(x,\xi)\to\infty$ as $\det(I+\xi)\to0$ if $\lim\limits_{x\to 0}h(x)=\infty$;
\item\label{exa15} $\W$ is periodically ru-usc.
\end{enumerate}
\end{proposition}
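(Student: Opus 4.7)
The proof is a routine verification of the five parts, most of them being direct consequences of the splitting $\W(x,\xi)=\Phi(x,\xi)+g(\xi)$ on $\GG$ (with the value $+\infty$ outside) together with the properties of $g$ already established in Proposition~\ref{gpoly}.

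Part (i) is immediate: only $\Phi(\cdot,\xi)$ depends on $x$, and it is $1$-periodic by assumption. For part (ii), on $\GG$ the $p$-polynomial growth~\eqref{Intro-Application-p-polynomial-Growth} of $\Phi$ combined with $G(\xi)=|\xi|^p+g(\xi)$ yields
\[
\min(c,1)\,G(\xi)\le c|\xi|^p+g(\xi)\le \W(x,\xi)\le C(1+|\xi|^p)+g(\xi)\le \max(C,1)(1+G(\xi)),
\]
while outside $\GG$ both $\W$ and $G$ equal $+\infty$; the $p$-coercivity~\ref{A1} follows from $\W(x,\cdot)\ge\Phi(x,\cdot)\ge c|\cdot|^p$. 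For parts (iii)--(iv), whenever $\xi\in\GG$ condition~\ref{exa3} forces $\det(I+\xi)>0$, hence $g(\xi)=h(\det(I+\xi))$ is a well-defined element of $[0,\infty]$; the polynomial growth of $\Phi$ gives $\Phi(x,\xi)<\infty$, so $\W(x,\xi)<\infty$ is equivalent to $h(\det(I+\xi))<\infty$, and under the assumption $\lim_{s\to 0^+}h(s)=\infty$ one gets $\W(x,\xi)\ge g(\xi)\to\infty$ as $\det(I+\xi)\to 0$.

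The only substantive step is part (v). My plan is to take the weight $a\equiv 1$ (constant, hence trivially $1$-periodic and locally integrable) and to derive the ru-usc estimate for $\W$ by combining two ru-usc-type estimates for the summands $\Phi$ and $g$. On the one hand, Proposition~\ref{gpoly}~\ref{exa7} yields $\Delta_g^1(t)\le(1-t^{2r})/t^{2r}\to 0$ as $t\to 1^-$. On the other hand, since $\Phi(x,\cdot)$ is quasiconvex with $p$-polynomial growth, the Acerbi--Fusco Lipschitz estimate already used in the proof of Theorem~\ref{classic-homogenization} (cf.~\eqref{Intro-Bis-Lipschitz}--\eqref{Intro-Eq-Prop-Eq1}) supplies a constant $K'>0$ such that
\[
\Phi(x,t\xi)-\Phi(x,\xi)\le K'(1-t)\bigl(1+\Phi(x,\xi)\bigr)
\]
for all $x\in\RR^2$, all $\xi\in\MM^{2\times 2}$ and all $t\in[0,1]$. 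Adding these two bounds on $\GG$ and using $1+\W(x,\xi)\ge\max\{1+\Phi(x,\xi),\,1+g(\xi)\}$ yields
\[
\Delta_\W^1(t)\le K'(1-t)+\Delta_g^1(t)\xrightarrow[t\to 1^-]{}0,
\]
whence $\W$ is periodically ru-usc.

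No genuine obstacle arises: parts (i)--(iv) are immediate from the definitions and from Proposition~\ref{gpoly}, and part (v) only requires assembling two ru-usc estimates against the constant weight $a\equiv 1$. The one place where a little care is needed is ensuring that the denominator $1+\W(x,\xi)$ dominates both $1+\Phi(x,\xi)$ and $1+g(\xi)$ so that the two summand estimates can simultaneously be applied; this is automatic because both $\Phi$ and $g$ are nonnegative.
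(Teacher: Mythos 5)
Your proposal is correct and takes essentially the same approach as the paper: parts (i)--(iv) are treated as direct verifications, and for (v) the paper likewise splits $\W=\Phi+g$ on $\GG$, uses the Acerbi--Fusco Lipschitz estimate \eqref{Intro-Bis-Lipschitz} (as in the proof of Theorem~\ref{classic-homogenization}) to get that $\Phi$ is periodically ru-usc with constant weight, and combines this with Proposition~\ref{gpoly}~\ref{exa7} for $g$. The only immaterial difference is bookkeeping: the paper assembles the two bounds via a maximum against the denominator $2+\W(x,\xi)$ (weight $a\equiv 2$), while you add them against $1+\W(x,\xi)$ (weight $a\equiv 1$); both give $\limsup_{t\to 1^-}\Delta_\W^a(t)\le 0$.
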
 
\begin{proof}The only not direct property is~\ref{exa15}. Fix any $t\in[0,1]$, any $x\in\RR^d$ and any $\xi\in\GG$. As $\Phi$ is quasiconvex and satisfies \eqref{Intro-Application-p-polynomial-Growth}, then repeating the same arguments as in the proof of Theorem~\ref{classic-homogenization} (Subsect.~\ref{pgrowth-homogenization}) we see that $\Phi$ is periodically ru-usc with $a\equiv 1$.

On the other hand, as $g$ is ru-usc by Proposition~\ref{gpoly} ~\ref{exa7} we have
\begin{align}\label{Intro-Eq-Prop-Eq2}
g(t\xi)-g(\xi)\leq \Delta_g^1(t)(1+g(\xi)).
\end{align}
From \eqref{Intro-Eq-Prop-Eq1} and \eqref{Intro-Eq-Prop-Eq2} we deduce that 
\[
\W(x,t\xi)-\W(x,\xi)\leq \max\left\{\Delta_\Phi^2(t),\Delta_g^1(t)\right\}(2+\W(x,\xi)).
\]
Passing to the supremum on $x$ and $\xi$, we obtain
\[
\sup_{x\in\RR^d}\sup_{\xi\in\GG}{\W(x,t\xi)-\W(x,\xi)\over 2+W(x,\xi)}\le \max\left\{\Delta_\Phi^2(t),\Delta_g^1(t)\right\},
\]
and~\ref{exa15} follows when $t\to1$.
\end{proof}
\subsection*{Concrete example} 
Every $\xi\in\MM^{2\times 2}$ is denoted by
\begin{align*}
\xi:=\begin{pmatrix}
\xi_{11}&\xi_{12} \\
\xi_{21}&\xi_{22}
\end{pmatrix}
\end{align*}

Define the set
\begin{align*}
\GG:=\Big\{\xi\in\MM^{2\times 2}:\min\left\{1+\xi_{11},1+\xi_{22}\right\}\ssup \max\left\{\lvert\xi_{12}\rvert,\lvert\xi_{21}\rvert\right\}\Big\},
\end{align*}
Property~\ref{exa1} is evident. The subset $\GG$ is open and convex as intersection of open convex sets, so~\ref{exa2} holds. The assertion~\ref{exa3} is satisfied because for every $\xi\in\GG$ we have
\begin{align*}
\det(I+\xi)=\left(1+\xi_{11}\right)\left(1+\xi_{22}\right)-\xi_{12}\xi_{21}\ssup  \lvert\xi_{12}\xi_{21}\rvert-\xi_{12}\xi_{21}\ge 0.
\end{align*}
To verify~\ref{exa4}, we note that for every $\xi,\zeta\in\GG$ it holds
\begin{align*}
&{\rm tr}\left({\rm cof}(I+\xi)^\intercal(I+\zeta)\right)\\
=&\left(1+\xi_{11}\right)\left(1+\zeta_{11}\right)+\left(1+\xi_{22}\right)\left(1+\zeta_{11}\right)-\xi_{12}\zeta_{21}-\xi_{21}\zeta_{12}\\
>&\lvert\xi_{12}\zeta_{21}\rvert+\lvert\xi_{21}\zeta_{12}\rvert-\xi_{12}\zeta_{21}-\xi_{21}\zeta_{12}\ge 0.
\end{align*}
We can take $g:\MM^{2\times 2}\to [0,\infty]$ defined by
\[
g(\xi):=\left\{
\begin{array}{cl}
\displaystyle h(\det(I+\xi)) &\mbox{ if }\xi\in \GG\\ \\
\infty&\mbox{ otherwise}
\end{array}
\right.
\]
where $h$ is the convex and nonincreasing function defined by $h(x):=\frac1x$ for all $x\ssup 0$ satisfying \eqref{h} with $r=1$.
\begin{remark}\label{noconvex} It is easy to see that $\g(\cdot):=g(\cdot-I)$ is polyconvex but not convex. Indeed, consider $F\in I+\GG$ defined by
\begin{align*}
F:=\begin{pmatrix}
1&\frac12 \\ \\
-\frac12&1
\end{pmatrix}
\end{align*}
then $\g(\frac12F+ \frac12F^\intercal)=1\ssup \frac45= \frac12\left(\g(F)+\g(F^\intercal)\right)$.
\end{remark}
\begin{remark}\label{nofrind} A necessary condition for $\g$ to be frame indifferent is that $P(I+\GG)\subseteq I+\GG$ for all $P\in {\rm SO}(2)$, which in particular means that ${\rm SO}(2)\subseteq I+\GG$ since~\ref{exa1}. But, this is not true because the rotation of angle $\frac\pi2$ does not belong to $I+\GG$.  
\end{remark}

\section*{Appendix}

\setcounter{section}{1}
\subsection{Dacorogna-Acerbi-Fusco relaxation theorem}\label{DAF}
Let $\Omega\subset\RR^d$ be an open bounded set with Lipschitz boundary. Let $p\in [1,\infty[$. Let $f:\Omega\times\MM^{m\times d}\to [0,\infty]$ be a Borel measurable integrand with $p$-polynomial growth, i.e., there exist $c,C\ssup 0$ such that for every $\xi\in\MM^{m\times d}$ it holds
\begin{align*}
c\lvert \xi\rvert^p\le f(x,\xi)\le C(1+\lvert\xi\rvert^p).
\end{align*}
Let $J:W^{1,p}(\Omega;\RR^m)\to [0,\infty]$ be defined by 
\begin{align*}
J(u):=\int_\Omega f(x,\nabla u(x))dx.
\end{align*} 
Then for every $u\in W^{1,p}(\Omega;\RR^m)$
\[
\overline{J}(u):=\inf\left\{\liminf_{\eps\to 0}J(u_\eps): u_\eps\to u \mbox{ in }L^p(\Omega;\RR^m)\right\}=\int_\Omega \Q f(x,\nabla u(x))dx,
\]
where $\Q f:\Omega\times \MM^{m\times d}\to [0,\infty[$ is the {\em quasiconvexification} of $f$ given by the Dacorogna formula
\begin{align*}
\Q f(x,\xi)=\inf\left\{\int_Y f(x,\xi+\nabla \varphi(y))dy:\varphi\in W^{1,\infty}_0(Y;\RR^m)\right\}.
\end{align*}
As a consequence we have 
\begin{flalign*}
&\inf\left\{\int_\Omega f(x,\nabla u)dx:u\in W^{1,p}(\Omega;\RR^m)\right\}
=\inf\left\{\int_\Omega \Q f(x,\nabla u)dx:u\in W^{1,p}(\Omega;\RR^m)\right\}.&
\end{flalign*}
\subsection{Alexandrov theorem}\label{alexandrov} 
If the sequence $\{\mu_\eps\}_{\eps>0}$ of nonnegative finite Radon measures on $\Omega$ weakly converges to the Radon measure $\mu$, i.e.,
\[
\lim_{\eps\to 0}\int_\Omega\phi d\mu_\eps=\int_\Omega\phi d\mu\hbox{ for all } \phi\in C_{\rm c}(\Omega),
\]
then
\begin{enumerate}[label=(\alph*)]
\item $\liminf\limits_{\eps\to 0}\mu_\eps(U)\geq\mu(U)\hbox{ for all open sets } U\subset\Omega;$

\item $\limsup\limits_{\eps\to 0}\mu_\eps(K)\leq\mu(K)\hbox{ for all compact sets } K\subset\Omega;$

\item $\lim\limits_{\eps\to 0}\mu_\eps(B)=\mu(B)$ for all bounded Borel sets $B\subset\Omega$ with $\mu(\partial B)=0$.
\end{enumerate}

\bigskip

\bigskip

\subsubsection*{Acknowledgement} The first author acknowledges the mathematics department of the University of Tunis El Manar for the invitation in may 2012, where this work was initiated.

\end{document}